\documentclass[12pt, english, a4paper]{amsart}
\usepackage[margin=2.3cm]{geometry}
\usepackage{amssymb,amsmath,amsthm}
\usepackage{commath}
\usepackage{mathtools}
\usepackage{mathrsfs}
\usepackage{accents}
\usepackage[utf8]{inputenc}
\usepackage[T1]{fontenc}

\usepackage{caption}
\usepackage{subcaption}

\usepackage{graphicx}
\usepackage{xcolor}
\usepackage{pgf, tikz}
\usepackage{float}
\usepackage{tikzlings}
\usepackage{tikzducks}

\usepackage[colorlinks=true, allcolors=black]{hyperref}

\usepackage{url}

\usepackage{datetime}

\usepackage{chngcntr}

\theoremstyle{plain}
\newtheorem{theorem}{Theorem}
\newtheorem{definition}{Definition}
\newtheorem{lemma}{Lemma}
\newtheorem{conjecture}{Conjecture}

\theoremstyle{remark}
\newtheorem{remark}{Remark}

\newcommand{\R}{\mathbb{R}}
\newcommand{\M}{\mathcal{M}}
\newcommand{\eps}{\varepsilon}
\newcommand{\bd}{\partial}
\newcommand{\zz}{\mathbf{z}}

\date{\today}

\thanks{Research was partially supported by the ERC Advanced Grant "GeoScape" no.  882971, by Hungarian National Research (NKFIH) grants no. 147145, 147544, and 150151, which has been implemented with the support provided by the Ministry of Culture and Innovation of Hungary from the National Research, Development and Innovation Fund, financed under the ADVANCED-24 funding scheme. This research was funded by the grant 2024-1.2.8-TÉT-IPARI-CN-2025-00011,
with the support provided by the National Research,
Development and Innovation Office from the National Research,
Development and Innovation Fund, and financed under the
2024-1.2.8-TÉT-IPARI-CN funding scheme.
}


\title{Softening locally polyhedral tilings}   
\author{Gergely Ambrus}
\author{Dorottya Dancsó}

\begin{document}

\begin{abstract}
We call a cell $C \subset \mathbb{R}^d$ {\em soft} if every point of its boundary lies on a smooth curve contained in $\bd C$. A tiling of the space is called {\em completely soft} if all of its cells are soft. In their 2024 article, Domokos, Goriely, G. Horv\'{a}th and Reg\H{o}s conjectured that every polyhedral tiling of $\mathbb{R}^3$ satisfying mild regularity assumptions can be locally deformed into a completely soft tiling. By constructing an algorithm that first bends the edges emanating from each vertex and then extends this transformation to a sufficiently smooth deformation, they proved the conjecture for  polyhedral tilings satisfying a certain combinatorial condition. In the present paper, we precisely describe a new edge-bending algorithm that establishes a more general version of this conjecture: every {\em locally polyhedral} tiling of $\mathbb{R}^3$ can be completely softened.  We also give a short proof of an earlier result of Domokos, G. Horv\'{a}th, and Reg\H{o}s stating that every suitably nondegenerate polygonic tiling of the plane has, on average, at least two points per cell at which the softness criterion is violated.
\end{abstract}


\maketitle

\section{Overview}\label{sec:tilings}

Space-filling shapes have long captivated mathematicians; the theory of tilings (or tessellations) has remained a vibrant and actively researched area of geometry from Plato to the present day \cite{grunbaum1987tilings, harriss2017tilings, Schulte1993Tilings}. Beyond their architectural and artistic applications, space-filling patterns frequently occur in nature, for example, in the internal structure of crystals and foams. Consequently, the study of tilings has attracted sustained attention across a wide range of natural sciences, with a primary focus on the two- and three-dimensional settings. Although the planar case is now well understood and comparatively easier to visualize, the three-dimensional realm of space-filling shapes is richer and considerably more complex. This is the central topic of the article.

A {\em tiling} of $\R^d$ is a covering of $\R^d$ with cells that do not overlap (i.e., which are disjoint in their interior), and whose union is the whole space $\R^d$. Typically, cells are assumed to be {\em topological $d$-balls}: homeomorphic images of a closed Euclidean ball in $\R^d$. The most extensively studied tilings partition the plane into convex polygons, or tessellate $\R^3$ by convex polyhedra; these form the {\em cells} of the tiling. Throughout the paper, we will assume that all polygons and polyhedra are convex. Accordingly, we will omit the reference to convexity.

In a {\em polyhedral tiling} of $\R^3$, we require that the cells be convex polyhedra,  any two of which intersect only in common vertices, edges, or faces (thus, e.g., no cell vertex may lie in the relative interior of an edge of another cell). Polygonal tilings of the plane are defined analogously.
Furthermore, a tiling is {\em normal} if the sizes of the cells are uniformly bounded, see Definition~\ref{def:tiling} -- this also implies that the number of cells containing any given point is finite and uniformly bounded. We always impose this normality condition. The vertices of the cells are called the {\em nodes} of the tiling. 

Of particular interest are {\em monohedral tilings}, in which all cells are congruent; their common cell is called a {\em monotile}. Despite many classical constructions, determining which shapes admit monohedral tilings remains a difficult classification problem. Although numerous space-filling shapes and several infinite families are known, no general classification is available, even within important classes such as tetrahedra. Monotiles satisfying additional geometric requirements are therefore especially noteworthy.

Polyhedral tilings admit various generalizations. Following the work of Domokos, Goriely, G. Horváth, and Regős~\cite{DOM2024}, our focus is on  {\em polygonic} and {\em polyhedric tilings}: here, $\R^d$ for $d=2$ or $d=3$, respectively, is tiled with not necessarily convex cells that are compact topological $d$-balls whose pairwise intersections are lower-dimensional topological balls,  and which satisfy additional combinatorial and smoothness conditions (see Section~\ref{sec:defi} for the formal definition). Vertices, edges, and -- in the three-dimensional case -- faces of the tiling are defined according to their topological dimension, in direct analogy with the polygonal and polyhedral cases. The {\em nodes} of the tiling play a central role: these are points at which the number of containing cells is strictly locally maximal. 

We are mainly interested in tilings whose cells satisfy specific smoothness conditions. 
Here, {\em smoothness} is understood to mean the $C^2$ property (i.e., differentiability twice continuously). If necessary, all subsequent arguments can be readily adapted to accommodate higher degrees of smoothness, including infinite differentiability.

Given a topological ball $C$ and a point $p \in \bd C$ on its boundary, $p$ is called a {\em spike} (or sharp corner) of $C$ if there is no smooth curve on $\partial C$ that goes through $p$. The cell $C$ is called {\em soft} if it does not have spikes. Finally, a tiling is {\em completely soft} if it is composed of soft cells. 
These notions apply in both two- and three-dimensional settings. Note that our definition of softness varies from that given in \cite{DOM2023, DOM2024}; see the remark after Definition~\ref{def:softcell}.

The first question that arises in the context of soft cells is whether there are completely soft polygonic and polyhedric tilings of $\R^d$ for $d = 2,3$, respectively. In 2023, Domokos, G. Horváth, and Regős \cite{DOM2023} resolved the planar case, showing that in any polygonic tiling of the plane, the cells must have at least two spikes on average, provided that the average exists. However, the three-dimensional case is markedly different: there exist many polyhedric tilings consisting entirely of soft cells. The focus here is on whether, starting from a polyhedral tiling, one can modify it so that all cells become smooth.

Many  polyhedric tilings can be obtained from polyhedral tilings through local diffeomorphisms; however, this is not the case for all (see, e.g., Figure~\ref{fig:paplan}). We focus on a particular subclass: a polyhedric tiling is said to be {\em locally polyhedral} if every node is locally homeomorphic to a node of a polyhedral tiling (i.e., there is a homeomorphism between suitable neighborhoods of them in the two tilings). However, this local condition does not imply that the entire tiling is globally homeomorphic to a polyhedral tiling (see, e.g., Figure~\ref{fig:orsolakhely}).

The principal question we address is the following: can any locally polyhedral tiling be converted into a completely soft polyhedric tiling by a family of local homeomorphisms?  In brief terms, {\em can any locally polyhedral tiling be completely softened}? 
Answering a special case of the question, Domokos, G. Horváth, Goriely, and Regős proved in \cite{DOM2024} that all polyhedral tilings satisfying a certain combinatorial condition can be completely softened. They further conjectured that this condition is superfluous, namely, that {\em every} polyhedral tiling can be completely softened.

The purpose of this paper is to verify this conjecture for a broader class of tilings. By constructing a new edge-bending algorithm, we prove that every locally polyhedral tiling of $\R^3$ can be completely softened.

\section{Terminology and results}\label{sec:defi}

Throughout the article, we work in the Euclidean space $\R^d$ where $d = 2,3$; these will be referred to as {\em plane} and {\em space} in the cases  $d=2$ and $d=3$, respectively. We will denote the origin by $o$.

A closed halfplane/halfspace is a closed subset of $\R^d$, where $d=2,3$, whose boundary is a line/plane. A \emph{convex polygon}/\emph{convex polyhedron} is defined as a compact set with a nonempty interior that can be represented as the intersection of finitely many closed halfplanes/halfspaces. We will omit the reference to convexity, calling these objects simply {\em polygons} in the plane and {\em polyhedra} in the three-dimensional case. The boundary of a polygon consists of vertices and edges in between, whereas the boundary of a polyhedron constitutes vertices, edges, and faces (sides); the latter are convex polygons. These boundary objects of the polyhedron, along with the inclusion relation, make up its {\em face lattice}. Two polyhedra are {\em combinatorially equivalent} if their face lattices are isomorphic. Since we do not consider polytopes of dimension greater than three, we will not use the term \emph{facet}. For more details on polyhedra, see \cite{gruber2007convex,grunbaum2013convex,ziegler2012lectures}.

Given a polyhedron $P$, its vertices and edges define a graph, called the  {\em edge graph} of~$P$, and denoted by $G(P)$. It is well-known that $G(P)$ is planar; moreover, Steinitz's theorem \cite{grunbaum2013convex} states that the class of edge graphs of polyhedra equals the class of undirected, 3-vertex-connected planar graphs. These, along with a specified planar representation, will be called \emph{polyhedral graphs}. By an inverse stereographic projection (or projecting $P$ centrally onto a sphere $S$ that is centered at an interior point of $P$), the graph $G(P)$ can also be interpreted as being drawn on a sphere~$S$. Thus, $G(P)$ induces a subdivision of the sphere, and we say that the graph $G(P)$ and the corresponding spherical complex are {\em{combinatorially equivalent}} to $P$: that is, there exists a bijection between the vertices, edges, and faces that preserves inclusion.

Next, we recall some basic notions of topology. As usual, $B^d = \{ x \in \R^d: |x| \leq 1\}$
denotes the $d$-dimensional closed unit ball. Moreover, for a point $z \in \R^d$ and a radius $r> 0$, let 
$ B^d(z, r) = \{ x \in \R^d: |x-z| \leq r\} $ be the $d$-dimensional ball centered at $z$ with radius $r$, whose boundary is the sphere $S^{d-1}(z, r)$. 

A set is called a \emph{$d$-dimensional topological ball} if it is homeomorphic to $B^d$. By a \emph{$d$-dimensional manifold} we mean a topological space in which every point has an  open neighborhood that is homeomorphic to the open unit ball $\mathrm{int} \, B^d$. The manifold is said to be of class $C^2$, or simply {\em smooth}, if the corresponding homeomorphisms can be chosen to form a twice continuously differentiable mapping. A smooth two-dimensional manifold will be referred to as a {\em smooth surface}. 

By a smooth curve, we mean the image $f([a,b])$ of a closed interval $[a,b] \subset \R$, $a \neq b$, by  a continuous, twice continuously differentiable,  injective mapping $f \colon [a,b] \to \mathbb{R}^3$ with $f'(x) \neq 0$ for any $x \in (a,b)$  and non-vanishing one-sided derivatives at the endpoints. The mapping $f$ is called the parametrization of the curve, which is regular if $f'(x) \neq 0$ holds  for any $x \in (a,b)$. In particular, we assume that the curve is simple, i.e., non-self-intersecting. The endpoints of the curve are $f(a)$ and $f(b)$, and its interior is a smooth 1-dimensional manifold. The \emph{half-tangent} of a smooth curve at an endpoint $p$ is defined as the ray starting at $p$ to which the rays $\overrightarrow{pq}$ converge as the point $q$ on the curve tends to $p$ -- it follows from the differentiability property that this indeed exists uniquely. Here and later on, for a ray starting at a point $p$ and passing through another point $q$, we use the notation $\overrightarrow{pq}$. 

Next, we introduce the core concepts of the paper. 
\begin{definition}\label{def:tiling}
     A tiling $\M$ of $\R^d$ is a family of $d$-dimensional topological balls, called \emph{cells}, whose interiors are pairwise disjoint and whose union is $\R^d$. The tiling is \emph{normal} if its cells are uniformly bounded; that is, there exist constants $0<r<R<\infty$ such that each cell contains a ball of radius $r$ and is contained in a ball of radius $R$.
\end{definition}

Cells are also called {\em tiles}. We note that since the rational points form a dense set in $\R^d$, any tiling is countable. Furthermore, a topological ball is called a {\em monotile} if the space can be tiled with its congruent copies.

A fundamental property of a normal tiling is that the number of cells containing a given point is finite and uniformly bounded: see Lemma~\ref{lemma:normaltiling} for a proof in the planar case, which extends to higher dimensions without difficulty.

Note that if all cells of a tiling are convex, then these must be polygons in the planar case and polyhedra in the three-dimensional setting.  This fact leads to a special subclass of tilings:
\begin{definition} \label{def:polihedralis}
    A tiling $\M=\{C_1, C_2, \ldots \}$ of $\R^d$ is a \emph{polygonal tiling} for $d=2$, or a \emph{polyhedral tiling} for $d=3$, if it is normal, each cell $C_i\in \mathcal{M}$ is a convex polygon/polyhedron, and the intersection of any two distinct cells, if not empty, is a common vertex, edge, or face (in the polyhedral case).    
\end{definition}

In these tilings, the nodes are defined as the vertices of the cells. The local structure at a node $V$ is captured by the notion of the vertex figure of $V$; see Figure~\ref{fig_vertexfigure}.

\begin{definition} \label{def:csucsfigura}
    Let $V$ be a node of a polyhedral tiling $\M$. The \emph{vertex figure} $\nu(V)$ of $V$ is 
    \[
    \nu(V) := \{V\} \;\cup\;
    \bigl\{ \text{edges of $\M$ emanating from $V$} \bigr\}
    \cup\;
    \bigl\{ \text{faces of $\M$ containing $V$} \bigr\}.
     \]
\end{definition}

Since in Definition~\ref{def:polihedralis} we required polyhedral tilings to be normal, the vertex figure of $V$ contains only finitely many edges and faces. In order to visualize the combinatorial structure of the vertex figure, we introduce another concept: the vertex polyhedron.

\begin{definition} \label{def:csucspolieder}
    Let $V$ be a node of a polyhedral tiling $\M$, and select $\eps >0$ such that $B^3(V, \varepsilon)$ does not intersect any edge or face of $\M$ that is not contained in the vertex figure $\nu(V)$. Then $\nu(V)$ induces a subdivision of $S^2(V,\varepsilon)$ whose vertices arise as intersections of the sphere with the edges of $\nu(V)$, the edges are obtained by taking the intersections with the faces of $\nu(V)$ (hence these are arcs of great circles), and the faces are given by the intersections of the cells of $\M$ (thus these are spherically convex domains). We say that the polyhedron $P(V)$ is a \emph{vertex polyhedron} of $V$ if $P(V)$ is combinatorially equivalent to this subdivision of the sphere $S^2(V,\varepsilon)$.
\end{definition}

We remark that the induced spherical subdivision is also called the spherical vertex figure. Furthermore, note that the vertex polyhedron is defined only up to combinatorial equivalence.  The notion  is illustrated in Figure~\ref{fig:csucspolieder}. 

\begin{figure} [h]
\centering
\begin{subfigure}{.45\textwidth}
  \centering
  \includegraphics[width=0.7\linewidth]{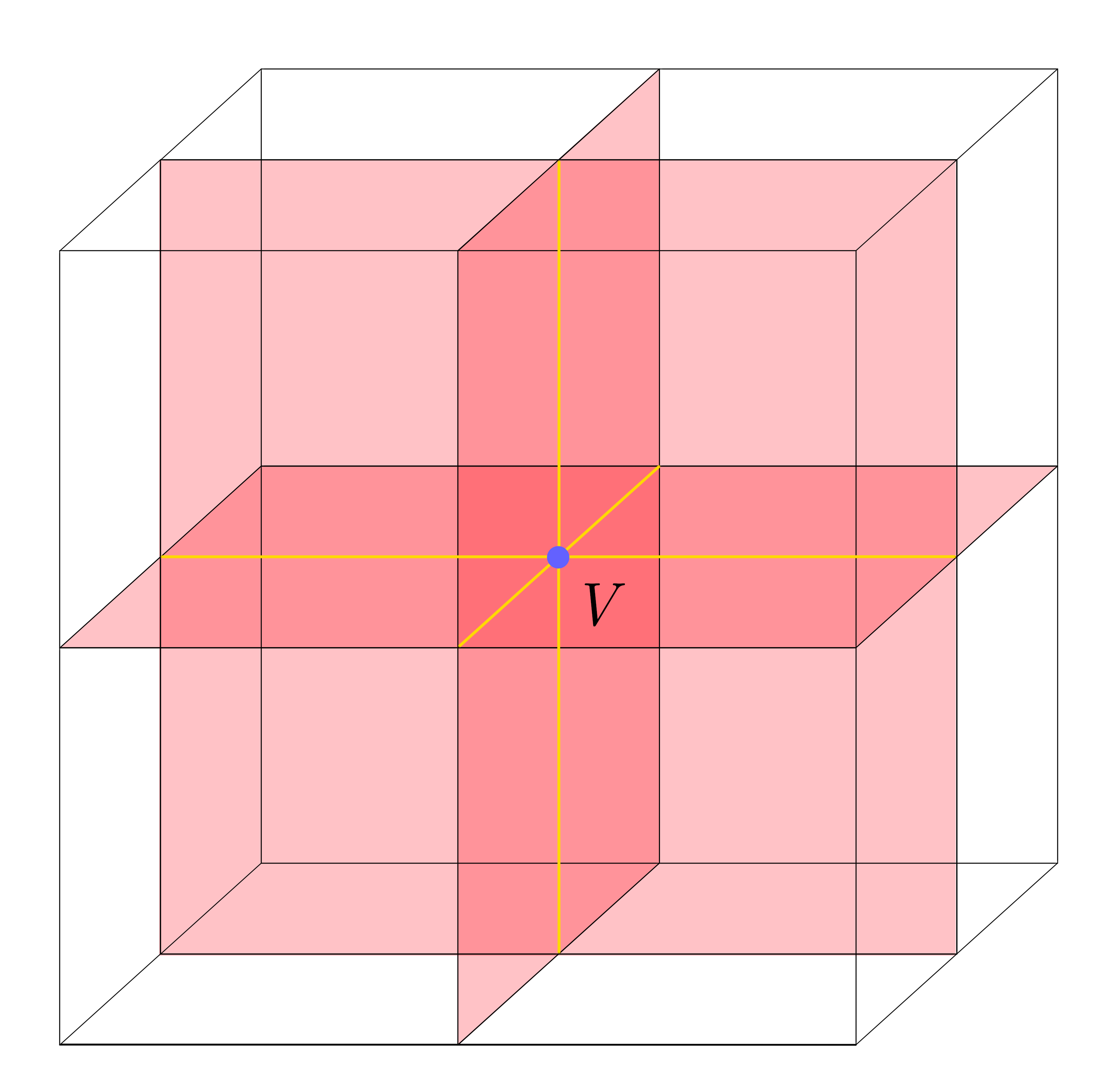} 
  \caption{Vertex figure}
  \label{fig_vertexfigure}
\end{subfigure}
\begin{subfigure}{.45\textwidth}
  \centering
  \includegraphics[width=0.7\linewidth]{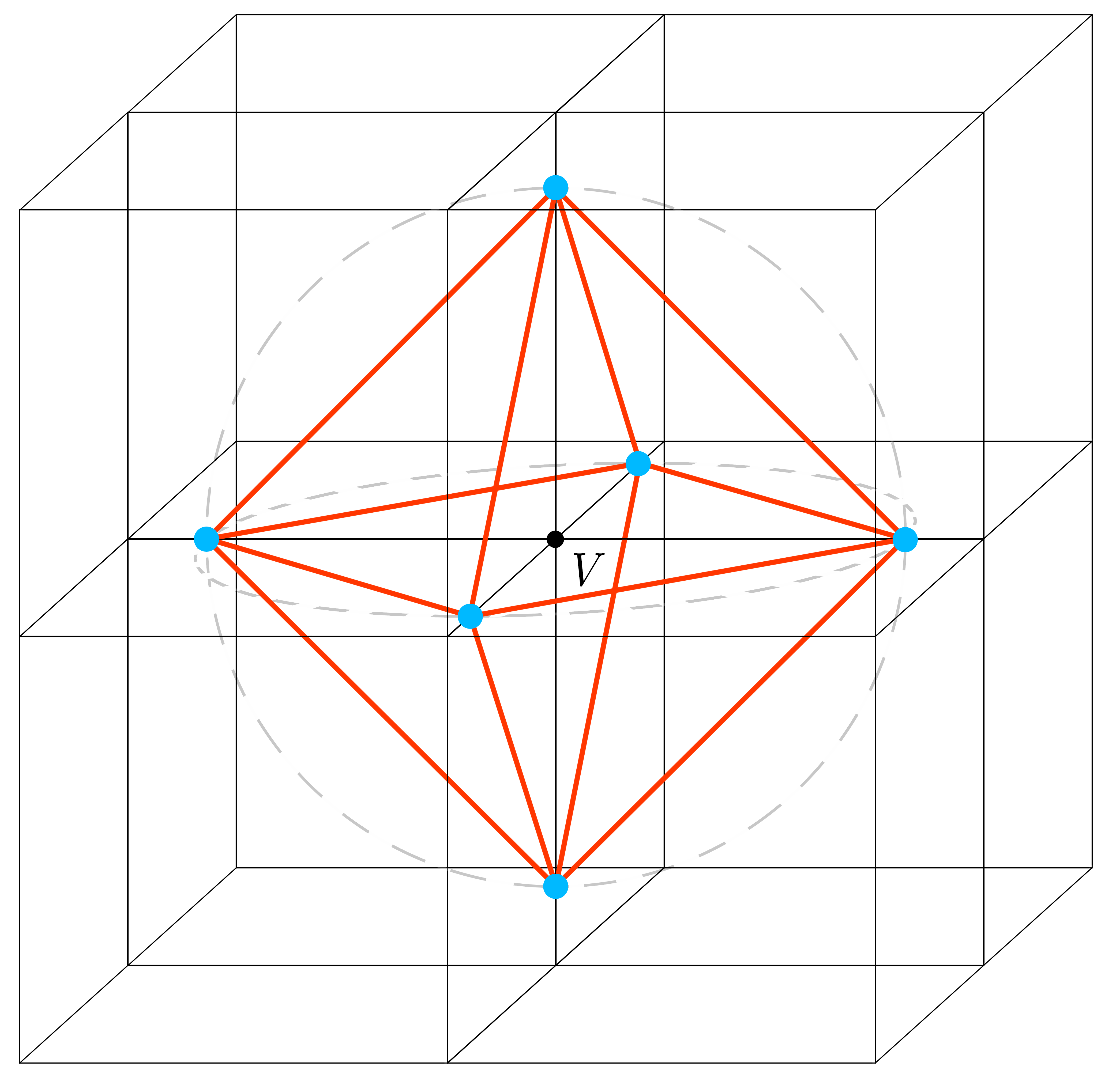}
  \caption{Vertex polyhedron}
    \label{fig:csucspolieder}
\end{subfigure}
    \caption{The vertex figure and vertex polyhedron of a node in a cube tiling}    
\end{figure}

Next, adopting the notions from~\cite{DOM2024}, we introduce polygonic and polyhedric tilings. Let $\M$ be a plane or space tiling. We define two functions associated with $\M$. For a given point $p \in \R^d$, let $C(p)$ denote the number of cells in $\M$ containing $p$, and let $D(p)$ denote the maximal dimension $q$ such that $C$ is constant in some topological $q$-ball containing $p$. Then, we define the \emph{type} of  point $p$ as the pair $(C(p), D(p))$. 

We start with the planar notion.

\begin{definition} \label{def:poligonikus}
    A normal tiling $\M$ of $\R^2$ is called a \emph{polygonic tiling} if it satisfies the following properties:

    \begin{enumerate}
    \item The intersection of any two distinct cells of $\mathcal{M}$ is either empty, a single point, or a  curve.

    \item The type of every point $p \in \mathbb{R}^2$ is one of the following:
    \begin{itemize}
        \item $(1,2)$ -- the topological closure of a maximal connected set of such points is called a \emph{cell} of the tiling;

        \item $(2,1)$ -- the topological closure of a maximal connected component formed by such points is called an \emph{edge} of the tiling;

           \item $(n,0)$ with some $n \ge 3$ -- these points are called the \emph{nodes} of the tiling, or the \emph{vertices} of the cells containing them.
    \end{itemize}
    \item Each edge is a smooth curve connecting exactly two nodes.
    \end{enumerate}
\end{definition}
\noindent
Condition (2) expresses that each edge belongs to exactly two cells, and  each node is contained in at least three cells. 

We continue with the $3$-dimensional analog.

\begin{definition}[\cite{DOM2024}] \label{def:polihedrikus}
    A normal tiling $\M$ of $\R^3$ is called a \emph{polyhedric tiling} if it satisfies the following properties:

    \begin{enumerate}
    \item The intersection of any two distinct cells of $\mathcal{M}$ is either empty, or it is a closed topological ball of dimension at most 2.

    \item The type of every point $p \in \mathbb{R}^3$ is one of the following:
    \begin{itemize}
        \item $(1,3)$ -- the topological closure of the  maximal connected sets of such points  form the \emph{cells} of the tiling;

        \item $(2,2)$ -- the topological closure of the maximal connected components formed by such points form the  \emph{faces} of the cells;

        \item $(m,1)$ with some $m \ge 3$ -- the topological closure of the maximal connected sets of points of this type form the \emph{edges} of the tiling; 

        \item $(n,0)$ with some $n \ge 4$ -- these points are called the \emph{nodes} of the tiling, or the \emph{vertices} of the cells containing them.
    \end{itemize}

    \item The relative interiors of the faces are smooth surfaces.
    \item Each edge is a smooth curve connecting exactly two nodes.
    \end{enumerate}
\end{definition}

In common terms,  condition (2) requires that each face belongs to exactly two cells, each edge is shared by at least three cells, and each vertex is contained in at least four cells. Note that the definition automatically excludes face–edge, face–vertex, and edge-vertex-type cell intersections.
Furthermore, note that the boundary structure of cells is intrinsic to the tiling; that is, the same cell may have different vertices, edges, and faces in different tilings.

The notion of vertex figure introduced for polyhedral tilings in Definition~\ref{def:csucsfigura} extends analogously to polyhedric tilings: given a node $V$, its vertex figure $\nu(V)$ is simply the collection consisting of the vertex $V$ and the edges and faces containing $V$. However, the spherical subdivision induced by $\nu(V)$ considered in Definition~\ref{def:csucspolieder} does not necessarily lead to a simple graph drawn on the sphere, as it may contain parallel edges. We illustrate this phenomenon in Figure~\ref{fig:paplan}: consider the regular cube tiling of $\R^3$, and let $F$ be one of its (square) faces. Modify the tiling by ``inflating'' $F$, that is, replace it by two smooth surfaces (``blankets'') whose intersection is the common, unchanged boundary, while keeping all other faces of the tiling intact. The region of space between these two new faces forms a new cell (the shaded domain in Figure~\ref{fig:paplan1}).
Let $V$ be any vertex of $F$. It is easy to see that the spherical subdivision induced by $\nu(V)$ contains  a region bounded by two parallel edges, see  Figure~\ref{fig:paplan2}. This example shows that in polyhedric tilings, vertex polyhedra may not exist.

\begin{figure}[h]
\centering
\begin{subfigure}{.45\textwidth}
  \centering
  \includegraphics[width=0.8\linewidth]{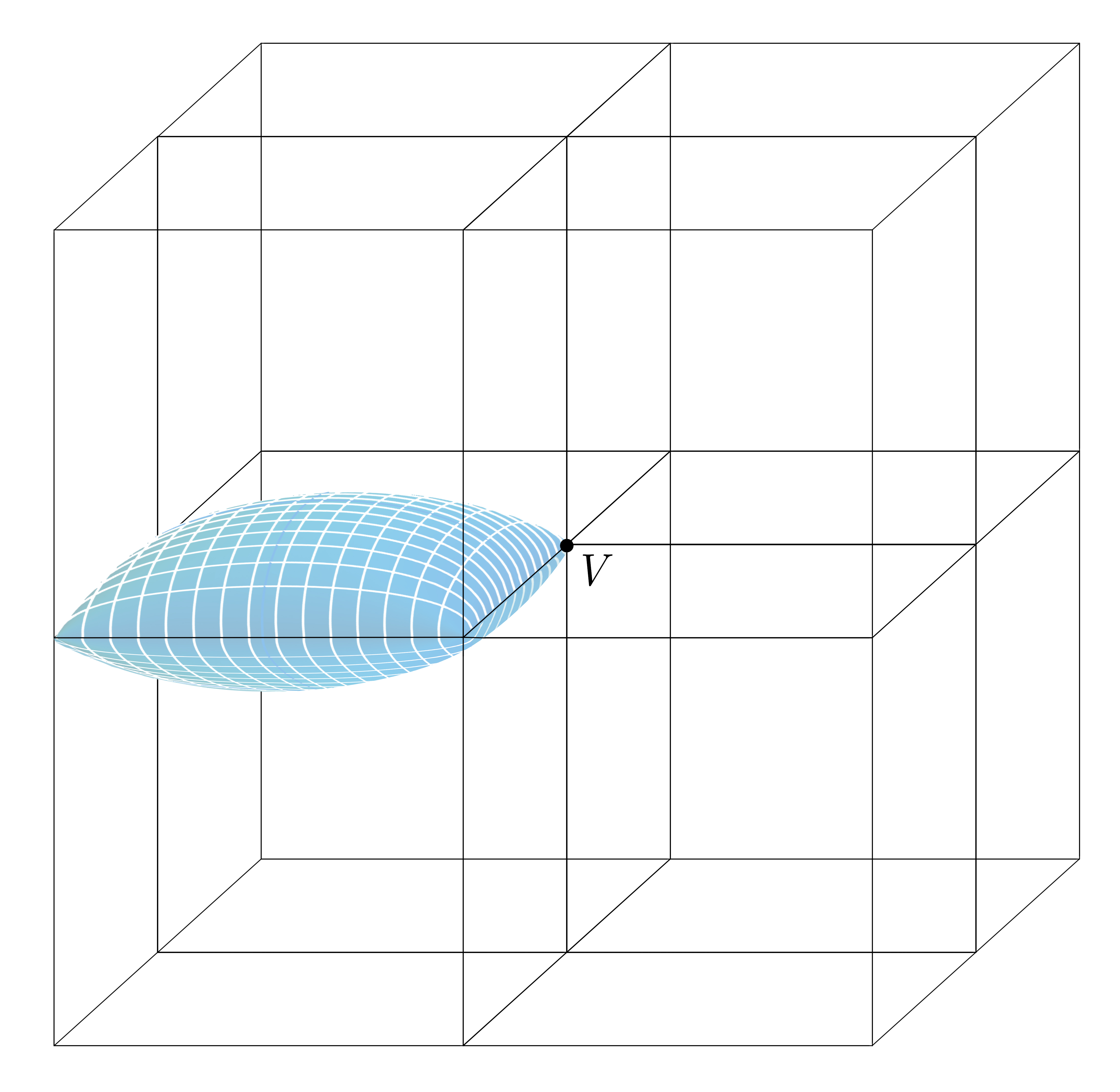}
  \caption{}
  \label{fig:paplan1}
\end{subfigure}
\begin{subfigure}{.45\textwidth}
  \centering
  \includegraphics[width=0.7\linewidth]{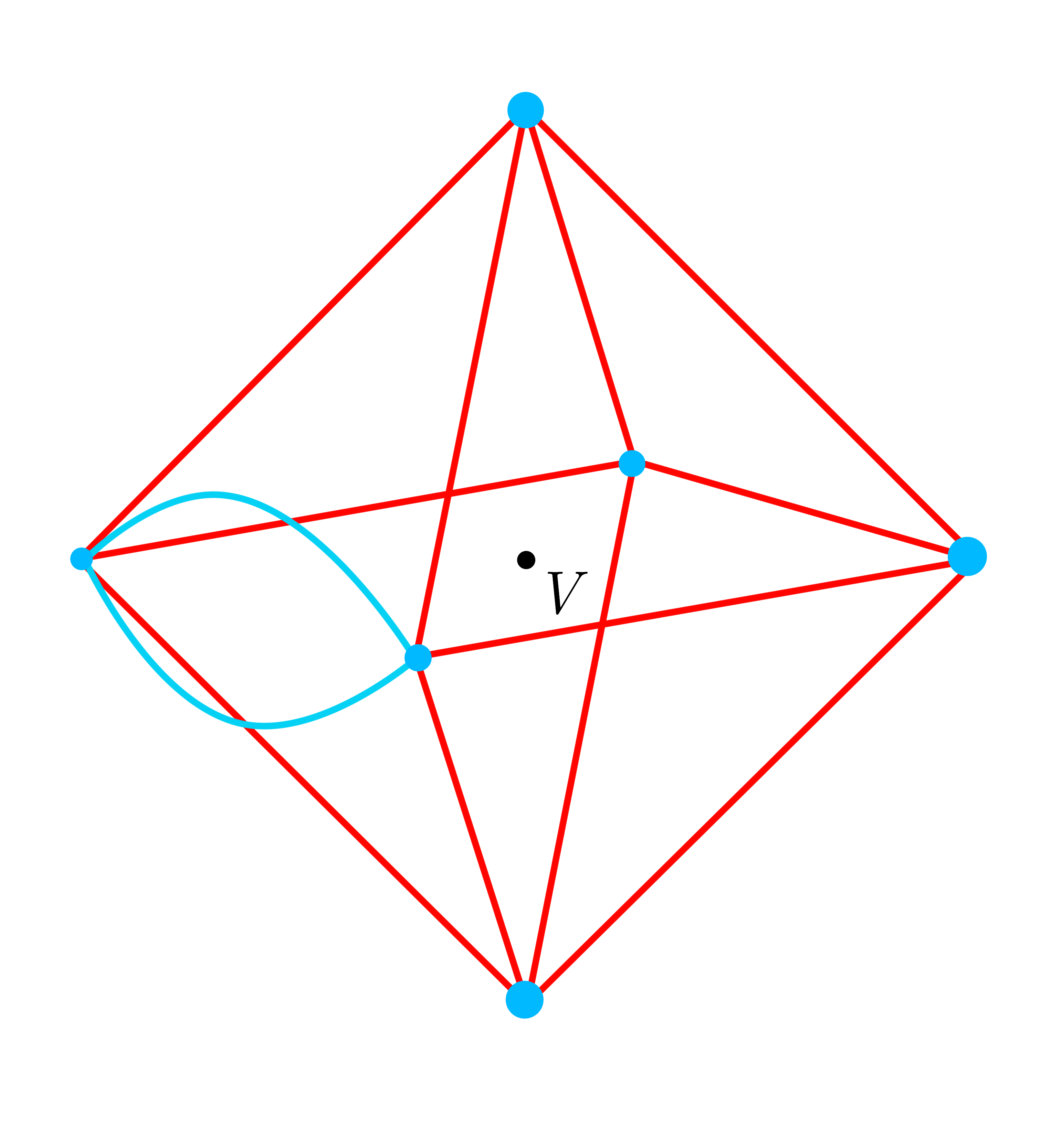}
  \caption{}
  \label{fig:paplan2}
\end{subfigure}
\caption{Local structure of a polyhedric tiling that is not locally polyhedral}
\label{fig:paplan}
\end{figure}

Therefore, it is natural to consider polyhedric tilings in which a neighborhood of each node has the combinatorial structure of a polyhedral tiling.

\begin{definition}\label{def:lokpol}
We call a polyhedric tiling $\M$ of $\R^3$ \emph{locally polyhedral} if, for every node $V$ of $\M$, there exists a positive radius $\eps>0$ such that $B^3(V,\eps)$ does not intersect any edge or face of $\M$ not incident to $V$, and the tiling induced by $\M$ in $B^3(V,\eps)$ is homeomorphic, as a cell complex, to a neighborhood of a node in a polyhedral tiling.
\end{definition}

\noindent
In particular, locally polyhedral tilings have locally finite vertex sets, and the local vertex figure at every node is combinatorially equivalent to a polyhedral vertex figure. By a cell-complex-preserving homeomorphism, we mean a homeomorphism that preserves the combinatorial structure of the decomposition into cells, faces, edges, and nodes.

Next, we introduce the key concepts of the paper.

\begin{definition}\label{def:softcell}
    Given a topological ball $C$ and a point $p \in \bd C$ on its boundary, $p$ is called a {\em spike} of $C$ if there is no smooth curve on $\partial C$ that contains $p$ in its relative interior. Given a tiling $\M$, a cell $C$ of it is called {\em soft} if it does not have spikes and $\M$ is {\em completely soft} if all its cells are soft. 
\end{definition}

We note that, in the three-dimensional case considered throughout most of the paper, the above condition is equivalent to the definition given in~\cite{DOM2024}. However, the two notions differ substantially in the two-dimensional case. We require soft cells to contain no spikes, whereas the authors of~\cite{DOM2024} define a node to be soft if the number of incident cells in which it is a spike cannot be reduced by a smooth local homeomorphism. In particular, the tiling in Figure~\ref{fig:nemlagycella} is soft according to the definition in~\cite{DOM2024}, but does not satisfy the criterion of our Definition~\ref{def:softcell}. For this reason, we avoid using the `soft' term for the formulation of Theorem~\ref{thm:planar} below.  We also note that `spikes' appear as `sharp corners' in~\cite{DOM2023, DOM2024}.

\begin{figure}[h]
    \centering
    \includegraphics[width=0.3\linewidth]{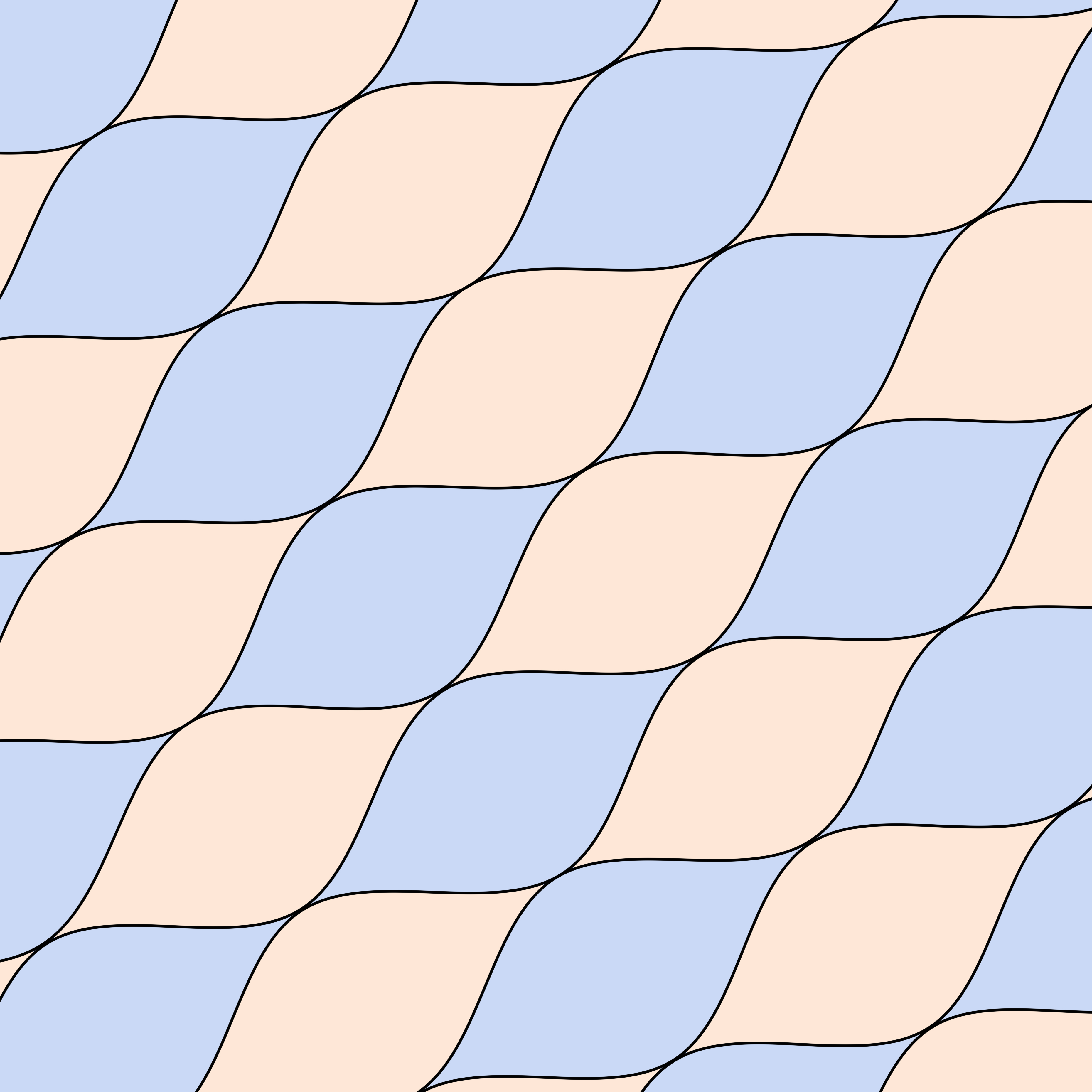}
    \caption{A polygonic tiling that is soft in the sense of \cite{DOM2024}, but not according to Definition~\ref{def:softcell}.}
    \label{fig:nemlagycella}
\end{figure}

As an immediate consequence of Definition~\ref{def:polihedrikus}, the only spikes in a polyhedric tiling can occur at its nodes. 

One of the fundamental questions concerning soft cells is the following: is it possible to tile the plane or space using only cells without spikes? This has recently been studied in \cite{DOM2024,DOM2023}. In the plane, the answer is negative, as the following theorem shows. A normal tiling is called {\em balanced} \cite{grunbaum1987tilings} if the average number of vertices and edges of the cells exists, i.e., calculating the averages in a ball of radius $R$ centered at the origin, these converge to specific values as $R \to \infty$. 

\begin{theorem}[Domokos, G. Horváth, and Regős \cite{DOM2023}]\label{thm:planar}
In a balanced polygonic tiling of the plane, the average number of spikes is at least two per cell. 
\end{theorem}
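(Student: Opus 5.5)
The plan is a local angle count at the nodes, converted into an average count via Euler's formula for balanced tilings. At a node $V$ of degree $n\ge 3$ there are $n$ edges emanating from $V$, each a smooth curve with a half-tangent at $V$. These cut a small neighbourhood of $V$ into $n$ sectors, one for each incident cell. The total angle is $2\pi$, so the half-tangent directions split the full angle into $n$ consecutive angles summing to $2\pi$.

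\emph{Step 1: when $V$ is not a spike of a cell.} Suppose $V$ is not a spike of a cell $C$. Then some smooth curve on $\partial C$ passes through $V$ in its relative interior. Near $V$, $\partial C$ consists of the two edges bounding the sector of $C$, so this curve must run along those two edges. Hence their half-tangents are opposite, and the sector angle of $C$ at $V$ equals $\pi$.

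\emph{Step 2: at most one non-spike cell per node.} Since every sector angle is nonnegative and $n\ge 3$, at most two sectors can have angle $\pi$. If exactly two did, the remaining $n-2\ge 1$ sectors would have angle $0$. I would rule this out by showing that two distinct edges with the same half-tangent would force a degenerate cusp-shaped cell. Here I expect to need either a nondegeneracy assumption (the "suitably nondegenerate" condition in the abstract), or the observation that a non-spike cell must have two edges forming one smooth curve, which conflicts with a zero-angle sector squeezed between them. Granting this, each node is a non-spike vertex of at most one cell. So if node $V$ has degree $n_V$, at least $n_V-1$ of the cells containing it have a spike at $V$.

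\emph{Step 3: averaging.} The number of spikes is then at least $\sum_V (n_V-1) = 2E - N$, where $E$ and $N$ are the numbers of edges and nodes. Each edge connects two nodes, so $\sum_V n_V = 2E$. Pass to averages per cell in balls of radius $R$, using balancedness and the normality bound (Lemma~\ref{lemma:normaltiling}) to control boundary effects. By Euler's formula, $N - E + F = O(\text{boundary})$, where $F$ is the number of cells, so asymptotically $E/F = N/F + 1$. Therefore
\[
\frac{2E-N}{F} = \frac{E}{F} + \frac{E-N}{F} \to \bar e + 1,
\]
where $\bar e$ is the limiting value of $E/F$. Since every node has degree at least $3$, $2E\ge 3N$, and combining this with $E = N + F$ gives $E\ge 3F$, so $\bar e\ge 3$. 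This bound is much stronger than $2$. I suspect I am over-counting, since a cell can be non-spiky at several nodes, but the per-node bound already yields $\ge 2$ comfortably. The weakest route needed is simply spikes $\ge 2E-N-(\text{nonspike count})$, and even the crude bound spikes $\ge 2E-2N = 2F$ (allowing up to two non-spike cells per node) gives exactly $2$ per cell. That crude bound sidesteps Step 2 entirely.

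So the cleanest plan is: at each node at most two cells are non-spiky, because angles of $\pi$ sum to at most $2\pi$. Hence spikes $\ge 2E-2N$, and the Euler relation $E-N\approx F$ gives an average of at least $2$ spikes per cell. The main obstacle is making the Euler-formula asymptotics rigorous: the boundary error terms in the radius-$R$ ball must be $o(F)$. I would handle this using normality, since the cells meeting the boundary circle number $O(R)$ while $F$ is of order $R^2$, together with the uniform bound on node degrees.
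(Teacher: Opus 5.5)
Your final plan matches the paper's proof. The paper uses the same double count: each node is a non-spike vertex of at most two cells, which the paper simply asserts and you justify with the $\pi$-sector/angle-sum argument. That gives spikes $\geq 2E-2N$, and Euler's formula on a finite disc-shaped patch then yields an average of at least $2$, with the $O(\rho)$ boundary contribution controlled by normality and Lemma~\ref{lemma:normaltiling}. Drop the Step~2 detour and the $2E-N$ bound, though: two non-spike cells at one node genuinely occur, e.g.\ a node with edges along both halves of the $x$-axis plus the cusp edge $y=x^3$, $x\ge 0$, so only the crude bound holds in general.
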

\noindent
We present a short proof of this result in Section~\ref{sec:plane} that also works for a more general class of smooth surfaces.

The three-dimensional case turns out to be fundamentally different: there is a plethora of completely soft space tilings, as was shown by Domokos, Goriely, G. Horváth and Regős~\cite{DOM2024}. Thus, stronger variants of the question can be studied: Given a polyhedral tiling, does there exist a soft polyhedric tiling that is combinatorially equivalent to it? Furthermore, can the original tiling be transformed ``smoothly'' so that all cells become soft? By virtue of Definition~\ref{def:softcell}, it suffices to find a smooth transformation that, at every node $V$ and cell $C$ that contains it, transforms two edges of $C$ emanating from $V$ into curves whose half-tangents at $V$ complement each other to a line, and whose union is a smooth curve. This method was introduced in \cite{DOM2024}, referred to as the `edge-bending algorithm'. The process is described precisely with the help of the following definitions.

\begin{definition}\label{def:Vlagyitas}
    Let $V$ be a node of a polyhedric tiling $\mathcal{M}$. We say that $V$ can be \emph{completely softened} 
    if there exist a homeomorphism $\Phi \colon \R^3 \to \R^3$, an open neighborhood $K(V)$ of $V$, and a line $t_V$ containing $V$, such that the following conditions are satisfied:
\begin{enumerate}
        \item[(a)] $\Phi(x)=x$ for all $x \notin K(V)$ and $\Phi(V)=V$;
        \item[(b)] the restrictions of $\Phi$ to the relative interiors of the edges and faces of the vertex figure $\nu(V)$ are smooth diffeomorphisms;
        \item[(c)] for every edge $\overline{Vv}$ of the vertex figure $\nu(V)$, the image $\Phi(\overline{Vv})$ is a smooth curve whose half-tangent at $V$ lies on the line $t_V$;  
        \item[(d)] for every cell $C$ of $\M$ containing $V$, there exist vertices $v$ and $w$ of $C$ adjacent to $V$ such that the union of the half-tangents of the curves $\Phi(\overline{Vv})$ and $\Phi(\overline{Vw})$ at $V$ is the complete line $t_V$, and $\Phi(\overline{Vv}) \cup \Phi(\overline{Vw})$ is a smooth curve.
    \end{enumerate}
\end{definition}

The neighborhood $K(V)$ appearing in the definition is called the \emph{bending neighborhood} of the node $V$, while the line $t_V$ is the \emph{axis} of $\Phi$.

\begin{definition}\label{def:Mlagyitas}
    A polyhedric tiling $\M$ can be  \emph{completely softened} if each of its nodes can be softened using pairwise disjoint bending neighborhoods.
\end{definition}

Clearly, if a polyhedral tiling can be completely softened, then there exists a soft polyhedric tiling that is combinatorially equivalent to it, thus yielding an answer to the weaker question as well.

Our main focus is the following question posed by Domokos, Goriely, G. Horváth, and Regős~\cite{DOM2024}: Given a polyhedral tiling of $\R^3$, does there exist an edge-bending algorithm that yields a complete softening?

\begin{conjecture}[{\cite[Conjecture 1]{DOM2024}}]\label{sejtes}
Every polyhedral tiling can be completely softened.
\end{conjecture}

In \cite{DOM2024}, the authors proved that Conjecture~\ref{sejtes} holds for a wide class of polyhedral tilings. 

\begin{theorem}[\cite{DOM2024}]\label{thm:DOM24}
    Assume that, for every node $V$ of the polyhedral tiling $\M$, the vertices of the associated vertex polyhedron $P(V)$ can be partitioned into two classes such that every face contains vertices from both classes and at least one of the two classes is edge-connected. Then $\M$ can be completely softened.
\end{theorem}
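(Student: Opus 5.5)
The plan is to treat each node separately and then combine the local deformations. For each node $V$ I will construct a homeomorphism $\Phi_V$ supported in a small ball $K(V)=\mathrm{int}\,B^3(V,\rho_V)$. Two facts make this workable. First, the vertex set of a normal polyhedral tiling is locally finite, so the radii $\rho_V$ can be chosen so that the balls are pairwise disjoint. Second, each ball meets only the edges and faces of $\nu(V)$, as in Definition~\ref{def:csucspolieder}. The global map is then the composition of all the $\Phi_V$, which satisfies Definition~\ref{def:Mlagyitas} automatically.

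Fix a node $V$, a partition $A\cup B$ of the vertices of $P(V)$ as in the hypothesis with $A$ edge-connected, and an arbitrary line $t_V$ through $V$ with unit direction $u$. The idea is to bend every edge of $\nu(V)$ corresponding to a vertex of $A$ so that its half-tangent at $V$ is $u$, and every edge corresponding to $B$ so that its half-tangent is $-u$.

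Condition (d) then follows directly from the hypothesis. A cell $C\ni V$ corresponds to a face of $P(V)$, which contains a vertex $a\in A$ and a vertex $b\in B$. The images of the two edges $\overline{Vv_a}$ and $\overline{Vv_b}$ then have opposite half-tangents, and together they should form a $C^2$ curve.

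To avoid the conflicting curvature constraints that arise because different cells pair different edges, I will make every bent edge flat to second order at $V$. Concretely, near $V$ an $A$-edge will be parametrized as $s\mapsto V+su+s^3w_a$ and a $B$-edge as $s\mapsto V-su+s^3w_b$, for $s\ge 0$ and suitable distinct vectors $w_a,w_b\perp u$. Any $A$-edge and any $B$-edge then glue, via $s\mapsto -s$ on the $B$-side, into a regular $C^2$ curve through $V$, since both one-sided second derivatives vanish.

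The main obstacle is realizing these prescribed edge shapes by an ambient homeomorphism that stays a diffeomorphism on the relative interiors of faces and edges (condition (b)). This is where edge-connectedness of $A$ enters.

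I will work on the sphere of directions, using the radial picture $x=V+r\theta$ with $\theta\in S^2$. In the spherical subdivision induced by $\nu(V)$, take a spanning tree of the subgraph on $A$, which exists because $A$ is edge-connected. A thin regular neighbourhood $D_A$ of this tree is a closed topological disk. It contains all $A$-vertices, contains the spherical edges joining them that lie in the tree, and misses every $B$-vertex. Its complement $D_B$ is also a disk and contains all of $B$. Each remaining spherical edge either crosses $\partial D_A$ transversally or can be isotoped to do so.

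Next I choose a smooth isotopy $h_r$ of $S^2$, $0<r\le\rho_V$, with the following properties. First, $h_r=\mathrm{id}$ for $r$ near $\rho_V$. Second, for small $r$, $h_r$ maps $D_A$ into the spherical cap of radius $cr^2$ around $u$ and $D_B$ into the cap of radius $cr^2$ around $-u$. Third, $h_r$ sends each $A$-vertex direction to $u+r^2w_a$ up to normalization, and similarly for $B$. Such an isotopy exists by the disk theorem and the isotopy extension theorem.

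Define $\Phi(V+r\theta)=V+r\,h_r(\theta)$ and $\Phi(V)=V$. By construction $\Phi$ is a homeomorphism and is the identity outside $K(V)$. Condition (c) holds because the direction of an $A$-edge tends to $u$ at rate $r^2$, which gives the cubic parametrization above. Smoothness on face interiors away from $V$ follows from the smoothness of $(r,\theta)\mapsto h_r(\theta)$.

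The delicate verification is the $C^2$ regularity of the edge images at the endpoint $V$, where the $r^2$ rate must be matched exactly to the cubic form. I would enforce this by prescribing $h_r$ explicitly near the vertex directions, for instance as a rotation by an angle $\sim r^2$ composed with a fixed chart. The convergence of the faces' images to the two caps requires no smoothness at $V$ itself, so condition (b) is unaffected.
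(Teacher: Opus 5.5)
One property you demand of $h_r$ is impossible. You ask that $h_r$ send $D_A$ into the cap of radius $cr^2$ about $u$, and send $D_B$, the complement of $D_A$, into the cap about $-u$. Since $D_A\cup D_B=S^2$ and $h_r$ is a homeomorphism of $S^2$, its image would then lie in the union of two small disjoint caps, contradicting surjectivity. So the isotopy you obtain ``by the disk theorem and the isotopy extension theorem'' does not exist, and this is exactly the step where you say edge-connectedness enters.

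Fortunately nothing downstream uses the caps. The edges of $\nu(V)$ are radial segments $V+r\theta_i$, so their images $V+r\,h_r(\theta_i)$ depend only on the trajectories of the finitely many vertex directions. Condition~(b) only needs $\Phi$ to be a diffeomorphism of the punctured ball. So drop the tree and the caps, and proceed as follows:
\begin{itemize}
\item Choose a smooth path $r\mapsto(p_1(r),\dots,p_n(r))$ in the connected configuration space of $n$ distinct points of $S^2$. It should equal the original directions for $r$ near $\rho_V$, and equal $(\sigma_iu+r^2w_i)/\sqrt{1+r^4|w_i|^2}$ for small $r$, where $\sigma_i=\pm1$ according to the class and the $w_i\perp u$ are distinct within each class.
\item Let $h_r$ be the flow of a smooth time-dependent vector field on $(0,\rho_V]\times S^2$ carrying $\theta_i$ along $p_i(r)$.
\end{itemize}
Then $\Phi(V+r\theta)=V+r\,h_r(\theta)$ preserves $|x-V|$, so it is a homeomorphism, and it is a diffeomorphism off $V$. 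Every bent edge is regular for $r>0$, because $\frac{d}{dr}\bigl(r\,h_r(\theta_i)\bigr)$ has component $1$ along $h_r(\theta_i)$. Near $V$ the edges are $V+(\sigma_i r u+r^3w_i)/\sqrt{1+r^4|w_i|^2}$. After substituting $s=-r$ on the $B$-side, any $A$-edge and $B$-edge have common derivative $u$ and vanishing second derivatives at $V$. This yields (a)--(d).

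The repaired argument uses only that every face of $P(V)$ meets both classes, never that $A$ is edge-connected. Combined with Lemma~\ref{lemma:policolor}, it proves the polyhedral case of Theorem~\ref{thm:fotetel}, which is how the paper subsumes this statement. The paper only quotes Theorem~\ref{thm:DOM24} from \cite{DOM2024}, where edges are bent one at a time along a connected ordering; that is the source of the connectivity hypothesis.

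Your route is essentially the radial scheme sketched in the paper's closing Remark, with the required flatness of the limiting angular map supplied by the $r^2$ rate. The paper instead builds an explicit axis-parallel displacement $\Phi_V(\eps,\alpha,h)=(\eps,\alpha,h+\mu(h)T(\eps,\alpha))$ in cylindrical coordinates. It takes values $\pm\frac r4\varphi(\eps/\kappa)$ on the edge directions, interpolates linearly in the angle with smoothing by $\psi$, and gets the $C^2$ join at $V$ from Lemma~\ref{lemma:smoothjoin} via $\varphi''/(\varphi')^3\to0$.

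The paper's construction is completely explicit, but it needs a generic axis (no chord of $\nu(V)$ parallel to $t_V$) and a separate check of bijectivity. Yours works with any axis and gets the homeomorphism property and the $2$-jets at $V$ almost for free, at the cost of a non-constructive isotopy-extension step.
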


The connectedness requirement reflects the inductive nature of the proof: edges of a given vertex figure are bent one at a time, according to a connected ordering of the vertices of the induced vertex subgraph. 

Instead of finding a partition that satisfies the above conditions, the authors also formulated an alternative criterion that uses the concept of the \emph{dual edge graph} of a polyhedron~$P$. This is the planar dual of the edge graph of $P$, whose vertices correspond to the faces of~$P$, and two vertices are connected by an edge if and only if the corresponding faces share a common edge in $P$. In \cite{DOM2024}, it is proved that if at every node $V$ of a polyhedral tiling, the dual edge graph of $P(V)$ contains a Hamiltonian circuit, then the combinatorial property required in Theorem~\ref{thm:DOM24} holds. Consequently, the tiling can be completely softened. However, this latter property is not always satisfied: the smallest example is given by taking the dual of a vertex polyhedron $P(V)$ to be the Herschel graph. In this case, $P(V)$ is an $11$-faced polyhedron that can be realized as the convex hull of the midpoints of the edges of a triangular prism, whose dual edge graph contains no Hamiltonian circuit. 

Nevertheless, the authors of \cite{DOM2024} conjectured that every polyhedron admits a vertex partition that satisfies the conditions of Theorem~\ref{thm:DOM24}. This interesting conjecture, which would yield a proof of Conjecture~\ref{sejtes} via their edge-bending algorithm, remains open. 

\medskip

The main result of the present article establishes the following generalization of Conjecture~\ref{sejtes}:

\begin{theorem}\label{thm:fotetel}
Every locally polyhedral tiling can be completely softened.
\end{theorem}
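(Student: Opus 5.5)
The plan is to soften each node $V$ separately inside a small ball $B^3(V,\eps)$ given by Definition~\ref{def:lokpol}. Choosing the radii small enough makes these bending neighborhoods pairwise disjoint, as Definition~\ref{def:Mlagyitas} requires. This is possible since the node set is locally finite and, by normality, the edges are uniformly separated. The construction at a node has a combinatorial part and a geometric part.

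\textbf{Combinatorial step.} Fix a node $V$ and a polyhedral vertex figure homeomorphic to the local structure at $V$. This gives a $3$-connected plane graph $G=G(P(V))$ drawn on a sphere. Its vertices correspond to the edges of $\nu(V)$ and its faces to the cells containing $V$. I want a $2$-colouring of the vertices of $G$ (colours $+$ and $-$) such that no face is monochromatic; no connectedness of the colour classes is needed. To obtain it, add chords inside each face of $G$ to get a plane triangulation $T$, take a proper $4$-colouring of $T$ by the Four Colour Theorem, and merge colours $\{1,2\}$ into $+$ and $\{3,4\}$ into $-$. Every face of $G$ contains a triangle of $T$, which carries three distinct colours, so the face receives both $+$ and $-$. (I would also look for a direct elementary argument, e.g.\ induction on contractible edges, to avoid the Four Colour Theorem, but the above suffices.) In each cell $C\ni V$ we then pick one $+$ edge $\overline{Vv}$ and one $-$ edge $\overline{Vw}$.

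\textbf{Geometric step.} Choose a line $t_V$ through $V$ with unit direction $u$. We aim for a homeomorphism $\Phi$, equal to the identity outside $B^3(V,\eps)$, such that on a smaller ball $B^3(V,\delta)$ every $+$ edge is mapped into a short straight segment on the ray $V+\R_{\ge0}u$ and every $-$ edge into one on the opposite ray. Then for each cell the union $\Phi(\overline{Vv})\cup\Phi(\overline{Vw})$ is a straight segment near $V$, hence a $C^2$ curve through $V$ in its relative interior, so condition (d) holds. Because two edges cannot coincide, the segments must be perturbed slightly: I would take $\Phi(\overline{Vv})$ near $V$ to be a curve $V+su+s^3 a_v$ with distinct vectors $a_v\perp u$. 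These curves all have half-tangent $u$, are pairwise disjoint for $s>0$, and concatenating a $+$ curve with a $-$ curve $V-su+s^3a_w$ still gives a $C^2$ curve, since the second derivatives vanish at $V$.

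To build $\Phi$, I would work in "polar" coordinates around $V$: $x=V+r\theta$ with $r\in(0,\eps]$ and $\theta\in S^2$. On each sphere $S^2(V,r)$ the vertex figure traces a spherical graph $G_r$, which is isotopic to $G$ for all small $r$. Since the configuration space of $k$ distinct labelled points on $S^2$ is connected, there is an ambient isotopy of $S^2$ moving the vertices of $G_r$ to prescribed positions. The $+$ vertices go to points $\theta_v(r)=\bigl(u+r^2a_v\bigr)/\bigl|u+r^2a_v\bigr|$ clustered near $u$, and the $-$ vertices to the analogous points near $-u$, dragging the arcs (traces of faces) along. Then $\Phi(V+r\theta)=V+r\,h_r(\theta)$, where $h_r$ interpolates from the identity at $r=\eps$ to this target isotopy for $r\le\delta$, which gives (a) and (c). The faces become ribbons stretched between the two clusters, and the $2$-colouring guarantees that each cell's region on the sphere contains points of both clusters.

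\textbf{Main obstacle.} The hard part is making $h_r$ depend smoothly enough on $r$, including at $r\to0$, that condition (b) holds: $\Phi$ must restrict to diffeomorphisms on the relative interiors of the faces and edges. Two difficulties arise. First, in a locally polyhedral (not polyhedral) tiling, the original edges may be mutually tangent at $V$, and $G_r$ need not converge as $r\to0$. Second, the isotopy has to be chosen coherently in $r$. My first attempt would be a preliminary local diffeomorphism near $V$ that straightens the smooth edges into rays with distinct directions and the faces into smooth cones. I would then use a single fixed isotopy $H_t$ of $S^2$ (from the straightened picture to the clustered one) with the reparametrization $h_r=H_{\beta(r)}$, where $\beta$ is a smooth cutoff. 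The explicit $r^2a_v$ perturbation is inserted only in a final, explicitly smooth stage. If the straightening fails because of tangencies, I would instead parametrize directly by the arclength of each edge, and extend over the faces using a smooth collar of the edge curves.
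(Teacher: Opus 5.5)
Your proposal is sound, and its geometric step takes a genuinely different route from the paper. The colouring step is the same as the paper's (second proof of Lemma~\ref{lemma:policolor}).

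\textbf{How the two deformations differ.} The paper uses an axis-parallel shear in cylindrical coordinates, $\Phi_V(\eps,\alpha,h)=(\eps,\alpha,h+\mu(h)T(\eps,\alpha))$, where $T$ is a smoothed angular interpolation of $\pm\frac r4\varphi(\eps/\kappa)$ with $\varphi(x)=x^{1/3}(1-x)^3$. The bending comes from $\varphi'(0^+)=\infty$. The $C^2$ join of a $+$ edge with a $-$ edge needs Lemma~\ref{lemma:smoothjoin}, and hence three further ingredients: the condition $\varphi''/(\varphi')^3\to0$, an axis parallel to no chord of the vertex figure, and, in the locally polyhedral case, a preliminary deformation separating the half-tangents.

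Your radius-preserving map $V+r\theta\mapsto V+r\,h_r(\theta)$ is the general scheme sketched in the paper's final Remark. Prescribing the edge trajectories $\theta_v(r)$ is a concrete way of meeting the flatness condition that Remark mentions. This buys a lot:
\begin{itemize}
\item $|\Phi(x)-V|=|x-V|$ gives continuity at $V$ and the homeomorphism property for free.
\item A curve $r\mapsto V+r\,c(r)$ with $|c(r)|=1$ has speed $\sqrt{1+r^2|c'(r)|^2}\ge1$, so the image edges are automatically regular.
\item Conditions (c) and (d) reduce to comparing $tu+t^3a_v+O(t^5)$ with $tu-t^3a_w+O(t^5)$.
\item No genericity of $t_V$ and no special profile $\varphi$ are needed.
\end{itemize}
What you give up is the paper's explicit formula, since the existence of $h_r$ rests on isotopy extension.

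\textbf{The obstacle you flag is not really there.} You should drop the straightening step. A map differentiable at $V$ cannot separate mutually tangent edges, and faces need not have tangent cones at $V$. Condition (b) of Definition~\ref{def:Vlagyitas} concerns only relative interiors. So you only need $(r,\theta)\mapsto(r,h_r(\theta))$ to be a $C^2$ diffeomorphism of $(0,\eps]\times S^2$ with $h_r=\mathrm{id}$ near $r=\eps$. Nothing is required as $r\to0$ beyond the edge images, and those you prescribe.

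Such a family can be built directly:
\begin{itemize}
\item The traces $p_v(r)$ of the edges on $S^2(V,r)$ form a smooth path in $\mathrm{Conf}_n(S^2)$ for $r\in(0,\eps]$. Distance to $V$ is a regular parameter on each edge near $V$, because the one-sided derivative there is nonzero.
\item Pick a smooth path $q(r)$ in $\mathrm{Conf}_n(S^2)$ equal to $p(r)$ near $r=\eps$ and to $(\theta_v(r))_v$ for $r\le\delta$.
\item Let $A_r,B_r$ be flows of time-dependent vector fields on $S^2$ with $A_r(p_v(\eps))=p_v(r)$ and $B_r(p_v(\eps))=q_v(r)$, using the same field wherever $q=p$.
\item Set $h_r=B_r\circ A_r^{-1}$.
\end{itemize}
Degeneration as $r\to0$ is harmless, both for your clustered targets and for possibly tangent original edges, because the parameter interval is open at $0$.

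A minor point: for pairwise disjoint bending neighbourhoods, local finiteness of the node set suffices, with radii depending on the node. No uniform separation of edges is needed.
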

\noindent 
Our proof relies on a novel edge-bending algorithm that is described in Section~\ref{sec:space}. 
By bending all edges of a vertex figure simultaneously, our method avoids the connectedness requirement imposed in~\cite{DOM2024} on one of the two color classes of the corresponding vertex polyhedron. 

\medskip

The notion of softness can be generalized by requiring higher-dimensional smooth neighborhoods of the boundary points. More precisely, we may define a boundary  point $p$ of a cell $C$ to be $k$-soft if there exists a smoothly embedded topological $k$-ball in $\bd C$ whose relative interior contains $p$. 
With this terminology, the soft cells considered here are precisely those whose boundary points are at least $1$-soft. This convention should not be confused with the codimension-based notion of $k$-softness recently introduced by Domokos~\cite{DOM2026}. That framework provides a more systematic treatment of the smooth submanifolds of different dimensions occurring on the boundary of higher-dimensional shapes and cells in tilings.

\section{Plane tiles have at least two spikes on average}\label{sec:plane}

In this section, we show that the cells of any balanced, polygonic plane tiling have at least two non-smooth vertices on average.

\begin{lemma}\label{lemma:normaltiling}
Suppose that $\M$ is a normal polygonic tiling, all of whose cells have inradius at least $r$ and circumradius at most $R$. Then the number of vertices of any cell is at most
\[
\frac{9 R^2}{r^2}.
\]
\end{lemma}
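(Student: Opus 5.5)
The plan is an area-packing count: bound the number of cells that can meet a fixed cell $C$, then show that the number of vertices of $C$ is at most the number of cells meeting $C$.

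\textbf{Step 1 (packing).} Let $c$ be the centre of a disc of radius $R$ containing $C$. Every cell $D$ with $D\cap C\neq\emptyset$ lies in a disc of radius $R$ that meets $B^2(c,R)$. Hence $D\subset B^2(c,3R)$. The cells have pairwise disjoint interiors, and each contains a disc of radius $r$. Comparing areas, the number $N$ of cells meeting $C$, with $C$ itself included, satisfies $N\pi r^2\le 9\pi R^2$, that is, $N\le 9R^2/r^2$.

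\textbf{Step 2 (vertices versus neighbours).} By Definition~\ref{def:poligonikus}, $\bd C$ is a closed curve. Its nodes $v_1,\dots,v_k$ cut it into $k$ arcs, and each arc is an edge of the tiling. Thus $C$ has exactly as many edges as vertices. Each edge $e$ consists of type $(2,1)$ points apart from its endpoints, so it is shared with exactly one other cell $D_e$.

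I want an injective map from the edges of $C$ into the set of cells meeting $C$ other than $C$ itself. The natural choice is $e\mapsto D_e$. This map is injective whenever each neighbour shares at most one edge with $C$.

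\textbf{Main obstacle.} Condition (1) only says that $C\cap D$ is a single curve. That curve might be a union of several consecutive edges $e_1,\dots,e_m$ of $C$, joined at nodes $w_1,\dots,w_{m-1}$.

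Each $w_j$ is a node, so it lies in some third cell $E_j\neq C,D$. The curve $C\cap D$ passes through $w_j$, and the intersection of two cells is either a point or a curve. I expect to argue from this that $E_j$ meets $C$ only at the point $w_j$. It then follows that distinct $w_j$ give distinct $E_j$, and that these $E_j$ share no edge with $C$.

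\textbf{Repairing the map.} I will send $e_1$ to $D$ and $e_{j+1}$ to $E_j$ for $1\le j\le m-1$. The sets of cells used in this way are disjoint for different neighbours $D$: the $D$'s are distinct edge-neighbours, and each $E_j$ is a point-neighbour attached to a unique node of $C$. So the map is injective.

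Checking the claim that $E_j$ meets $C$ only at $w_j$ is the delicate point. It uses the type conditions near $w_j$: the set $C\cup D$ covers a neighbourhood of $w_j$ except near $w_j$ itself.

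\textbf{Conclusion.} Injectivity gives $k\le N-1<9R^2/r^2$.
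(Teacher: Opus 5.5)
Your strategy is the paper's: all cells meeting $C$ lie in $B^2(c,3R)$, an area count bounds how many there are by $9R^2/r^2$, and the vertices of $C$ are injected into its neighbours. Step~1 is correct. The gap is in Step~2. You never prove the claim that each $E_j$ meets $C$ only at $w_j$. Moreover, the repair built on it addresses a configuration that cannot occur.

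The missing fact is that \emph{no node lies in the relative interior of the arc $\gamma=C\cap D$}. Let $w$ be such a point.
\begin{itemize}
\item Cells are closures of their interiors and have disjoint interiors, so $\gamma\subset\bd C\cap\bd D$. For a small disc $U$ around $w$ we then have $\bd C\cap U\subset\gamma$ and $\bd D\cap U\subset\gamma$.
\item The two edges of $\gamma$ at $w$ are smooth arcs with half-tangents, so for $U$ small, $\gamma$ cuts $U$ into two components.
\item Each component misses $\bd C$, so it lies either in $\mathrm{int}\,C$ or in the exterior of $C$. Exactly one lies in $\mathrm{int}\,C$, because $w$ is a limit both of interior and of exterior points of the Jordan domain $C$.
\item The same holds for $D$, and disjointness of interiors puts $C$ and $D$ on opposite sides. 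Hence $U\subset C\cup D$. Your own remark that $C\cup D$ covers a punctured neighbourhood of $w_j$ already gives this, since $C\cup D$ is closed.
\item A third cell containing $w$ would have a nonempty open set of interior points inside $\bd C\cup\bd D$, which has empty interior. So $w$ belongs to exactly two cells and is not a node.
\end{itemize}

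So the cells $E_j$ do not exist. Whenever $C\cap D$ is a curve it is a single edge, and your map $e\mapsto D_e$ is injective as it stands; the modified injection is superfluous. This is exactly the fact the paper uses, in one line, when it says that condition~(1) makes the number of vertices of $C$ equal to the number of cells sharing an edge with $C$. Your worry correctly spots the step the paper passes over quickly. The resolution, however, is that the configuration is impossible, not that it needs a workaround. With this lemma replacing your ``main obstacle'' discussion, your argument gives $k\le N-1<9R^2/r^2$ and is complete.
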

\begin{proof}
Take any cell $C$ of $\M$. Note that since the nonempty intersection of distinct cells is required to be a vertex or a single curve,  the number of vertices of $C$ equals the number of its neighboring cells, i.e., the cells that share a common edge with $C$.  
Since $C$ fits in a circle of radius $R$, all of its neighboring cells fit in a circle of radius $3 R$ with the same center.  Since the area of each such cell is at least $ r^2 \pi$, the bound readily follows. 
\end{proof}

Note that the same proof also yields that the maximal number of cells containing any given point is bounded above by $ 9 R^2 / r^2$.

\begin{proof}[Proof of Theorem~\ref{thm:planar}]
Let $\M$ be a balanced polygonic tiling. By Definition~\ref{def:poligonikus}, $\M$ is also normal, i.e., the cells are uniformly bounded: each of them contains a circle of radius $r$ and  is contained in a circle of radius $R$.

Consider the circular disc $B^2(o,\rho-2R)$ of radius $\rho- 2R$ centered at the origin $o$. All cells of $\M$ that intersect this disc must be contained in $B^2(o,\rho)$, since the diameter of any cell is at most $2R$.  By local finiteness and the planar cell structure of $\M$, these cells are contained in a finite collection $\M_\rho$ of cells whose union $U_\rho$ is a topological closed disc satisfying
\[
B^2(o,\rho-2R)
\subset
U_\rho
\subset
B^2(o,\rho),
\]
see Figure~\ref{fig:2dbiz}.

\begin{figure}[h]
    \centering
    \includegraphics[width=0.44\linewidth]{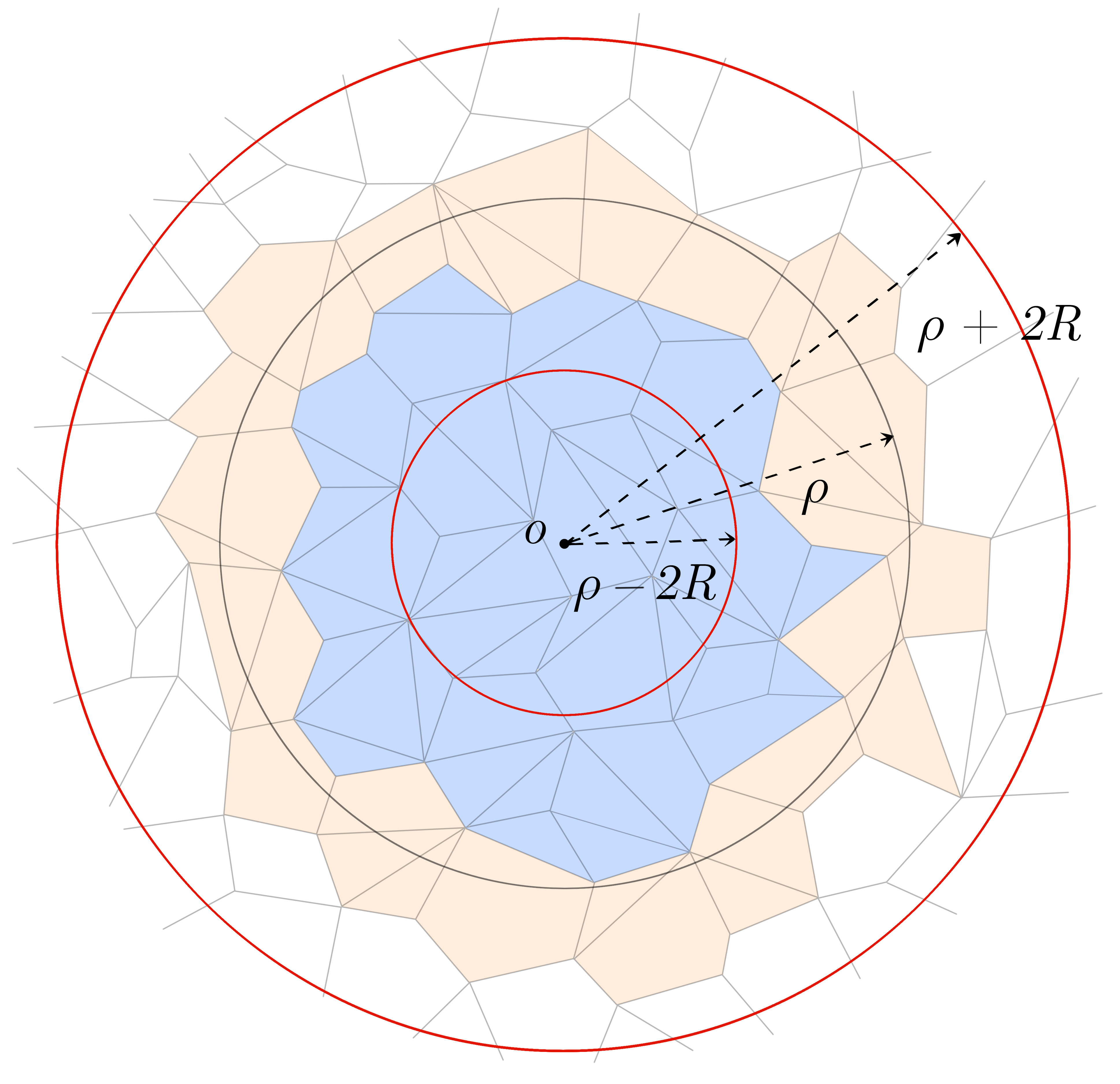}
    \caption{Cells of $\M_\rho$ in blue, cells of $\mathcal{N}_\rho$ in orange contained in the ring between the red circles of radii $\rho -  2 R$ and $\rho + 2 R$ centered at $o$.}
    \label{fig:2dbiz}
\end{figure}

Suppose that $\M_\rho$ consists of $k= k_\rho$ cells -- this number converges to $\infty$ as $\rho \to \infty$.  Denote the cells of $\M_\rho$ by $C_1, \ldots, C_k$. 
Let $G_\rho$ be the skeleton graph of $\M_\rho$, that is, the graph formed by the nodes and edges belonging to the cells of $\M_\rho$. Clearly, $G_\rho$ is a finite connected planar graph  whose faces are precisely the $k$ cells of $\M_\rho$ and its outer face $C_0$.

For each $i = 0, 1, \ldots, k$, denote by  $s_i$  the number of spikes and by $m_i$ the number of smooth vertices on the boundary of $C_i$. Clearly, the number of edges along the boundary of $C_i$ is equal to the total number of vertices therein, that is, $s_i + m_i$.

Note that any node can be a smooth vertex of at most two cells. Therefore, if $n$ denotes the total number of vertices of $G_\rho$, we have
\begin{equation}\label{eq:2n}
\sum_{i=0}^k m_i \leq 2 n.
\end{equation}
On the other hand, if $e$ denotes the number of edges of $G_\rho$, then 
\begin{equation}\label{eq:2e}
\sum_{i=0}^k (s_i + m_i) = 2 e
\end{equation}
because every edge is shared by exactly two faces of $G_\rho$.

Since the number of faces of $G_\rho$ is $k + 1$, Euler's formula implies that 
\[
n - e + (k+1) = 2.
\]
Therefore, 
\begin{align*}
2 &= 2n - 2e + 2k\\
&\geq   \sum_{i=0}^k m_i - \sum_{i=0}^k (s_i + m_i) + 2 k \\ 
&= 2 k - \sum_{i=0}^k s_i 
\end{align*}
\noindent
by \eqref{eq:2n} and \eqref{eq:2e}. This leads to 
\begin{equation}\label{eq:s_average}
    \frac{\sum_{i=1}^k s_i }{k} \geq 2 - \frac{s_0 + 2}{k}.
\end{equation}

The quantity on the left hand side above is precisely the average number of spikes of the cells in  $\M_\rho$. Thus, it suffices to show that $\frac{s_0 }{k}$ tends to $0$ as $\rho \to \infty$.

First, note that according to the normality assumption, each cell in $\M_\rho$ has an area at most that of a circular disk of radius $R$. Since the cells in $\M_\rho$ completely cover $B^2(o,\rho - 2R)$,  area comparison gives that 
\begin{equation}\label{eq:kest}
k \geq \frac{(\rho - 2 R)^2}{R^2} \geq \frac 1 {2 R^2} \cdot \rho^2
\end{equation}
if $\rho$ is sufficiently large. 

Next, we estimate $s_0$, the number of spikes along the boundary of $\M_\rho$. Let $\mathcal{N}_\rho$ be the set of cells in $\M \setminus \M_\rho$ that intersect (i.e., share a point with) a cell in $\M_\rho$, see Figure~\ref{fig:2dbiz}.  Note that by the diameter bound, each cell in $\mathcal{N}_\rho$ is contained in the ring between the circles of radii $\rho -  2 R$ and $\rho + 2 R$ centered at $o$, which has area $8 \pi R \cdot \rho$. Since each cell in $\mathcal{N}_\rho$ has area at least $\pi r^2$, their number is at most  $(8 R/r^2)\cdot \rho$.

Clearly, $s_0$ is at most the total number of vertices of cells in $\mathcal{N}_\rho$. Therefore, combining Lemma~\ref{lemma:normaltiling} with the above bound, we derive that 
\[ 
s_0 \leq \frac{9 R^2}{r^2} \cdot \frac{8 R}{r^2} \cdot \rho = \frac{72 R^3}{r^4} \cdot \rho.
\]
Consequently, by \eqref{eq:kest},
\[
\frac{s_0}{k} \leq \frac{144 R^5}{r^4} \cdot \frac 1 \rho
\]
which converges to 0 as $\rho \to \infty$.
\end{proof}

\begin{remark}
We note that the same proof implies that the average number of vertices with interior angle at most $2 \pi /3$ is at least two per cell. This fact was pointed out by an anonymous referee. 
\end{remark}

\begin{remark}
Following the suggestion of an anonymous referee, we also point out that Theorem~\ref{thm:planar} extends to smooth surfaces $\mathcal{S}$ that are either compact or admit a suitable notion of averaging, as in the case of \cite[Theorem 1]{DOM2023} -- see Remarks 2 and 3 therein. The only difference is that  \eqref{eq:s_average} transforms to 
\[
\frac{\sum_{i=1}^k s_i }{k} \geq 2 - \frac{s_0 + 2 \chi(\mathcal{S})}{k},
\]
where $ \chi(\mathcal{S})$ is the Euler characteristic of $\mathcal{S}$. 
If $\mathcal{S}$ is compact, then this provides an explicit lower bound. For noncompact surfaces with the required properties, the contribution of the term $\frac{2 \chi(\mathcal{S})}{k}$ converges to 0, hence we recover the estimate of Theorem~\ref{thm:planar}.
\end{remark}

\section{Softening locally polyhedral tilings}\label{sec:space}

The proof of Theorem~\ref{thm:fotetel} consists of two main steps. First, we show that a specific combinatorial condition holds for polyhedral graphs. Then, based on this property, we construct a novel edge-bending algorithm.

Given a polyhedral graph $G$, a {\em triangulation} of it is an augmented graph $G_\Delta$ that is obtained from $G$ by adding edges (while keeping the same vertex set) so that each face of the resulting graph is a triangle. This can be achieved by subdividing each face of $G$ (including the outer face) into triangles using pairwise non-crossing diagonals (e.g., using all diagonals starting from a given vertex of the face).

\begin{lemma}\label{lemma:policolor}
    Let $G$ be a polyhedral graph. Then the vertices of $G$ can be colored with two colors so that there are no monochromatic faces, i.e., every face contains vertices of both color classes.
\end{lemma}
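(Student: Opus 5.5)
The plan is to pass to a triangulation and use the Four Colour Theorem. First I will form a triangulation $G_\Delta$ of $G$ as described just before the statement, by adding non-crossing diagonals inside every face of $G$, including the outer face. Then $G_\Delta$ is a planar graph on the same vertex set. Since $G$ is $3$-connected, it is simple, and every face is bounded by a cycle of length at least $3$; so the added diagonals keep $G_\Delta$ simple, possibly after choosing the diagonals so as to avoid creating multi-edges. This is one of the details to check.

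Next I apply the Four Colour Theorem to $G_\Delta$ to get a proper colouring $c\colon V(G)\to\{1,2,3,4\}$. I merge the colours in pairs, putting colours $1,2$ into class $A$ and colours $3,4$ into class $B$. Because every face of $G_\Delta$ is a triangle whose three vertices receive three distinct colours, each triangular face of $G_\Delta$ meets both $A$ and $B$: three distinct colours cannot all lie in a two-element set.

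Finally, each face $F$ of $G$ is subdivided into triangles of $G_\Delta$ whose vertices are vertices of $F$. So $F$ contains the vertex set of at least one such triangle, and hence contains vertices of both classes. This gives the required colouring with no monochromatic face.

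The main obstacle is really only the justification that $G_\Delta$ can be taken to be a simple planar graph. If two faces of $G$ both need a diagonal between the same pair $u,v$, for instance the inner and outer face of a cycle-like configuration, a naive choice could produce parallel edges. Even so, the Four Colour Theorem applies to planar multigraphs without loops, because parallel edges do not affect proper colourings. So I would simply remark that loops never arise, since diagonals join distinct vertices, and that multiplicity is harmless.

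If I wanted to avoid the Four Colour Theorem, I would instead try a direct argument. One option is to take a spanning tree of the dual graph and colour greedily. Another is to use that the vertex–face incidence structure of a $3$-connected planar graph forms a hypergraph with property B, by induction on edge contraction. I expect the Four Colour Theorem route to be the cleanest.
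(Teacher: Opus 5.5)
Your proposal is correct and is essentially the paper's second proof: triangulate $G$, four-colour $G_\Delta$, and merge the colours into classes $\{1,2\}$ and $\{3,4\}$ so that no triangle, and hence no face of $G$, is monochromatic. The paper's first proof instead cites Thomassen's result that every planar graph admits a $2$-colouring with no monochromatic triangle. Your concern about parallel edges is harmless, as you note, and it is also moot: in a $3$-connected planar graph, face boundaries are induced cycles and two distinct faces share at most one vertex or one edge, so $G_\Delta$ is automatically simple.
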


We provide two proofs of the above statement.

\begin{proof}[First proof of Lemma~\ref{lemma:policolor}]
We will use a special case of Thomassen's result on list colorings~\cite{THO2008} stating that, given a planar graph, one can assign one of two colors to each vertex so that no triangle is monochromatic.

Let now $G_\Delta$ be a triangulation of $G$. Take a coloring of the vertices provided by the aforementioned result. This coloring satisfies the requirement of Lemma~\ref{lemma:policolor}, since if some face of $G$ was monochromatic, then all triangles of $G_\Delta$ arising from that face would also need to be monochromatic. Therefore, every face of $G$ necessarily contains vertices from both color classes.
\end{proof}

We note that in 2021, Karpiński and Piecuch \cite{KARPINSKI2021} gave a linear-time algorithm to construct a two-coloring of simple, undirected planar graphs that contain no monochromatic triangles. 

\begin{proof}[Second proof of Lemma~\ref{lemma:policolor}]
As before, let $G_\Delta$ be a triangulation of $G$. By the Four Color Theorem \cite{APPEL77p1, APPEL77p2}, the graph $G_\Delta$ admits a proper vertex coloring with four colors, so that adjacent vertices receive different colors. In particular, the vertices of every triangle receive three distinct colors. Let the four colors be $\{1,2,3,4\}$, and set $I = \{1,2\}$, $II = \{3,4\}$. Define a new coloring of $G_\Delta$ using the colors $I,II$ by coloring each vertex according to the class to which its original color belongs. Clearly, in this new coloring, no monochromatic triangle can occur, hence this is also a coloring of $G$ that meets the requirements. 
\end{proof}

The edge-bending algorithm will transform straight line edges to curves that satisfy certain properties. To that end, we introduce the function $\varphi \colon [0,1] \to \R$ defined as
\begin{equation}\label{eq:phi}
\varphi(x)= x^{1/3}(1-x)^3,
\end{equation}
see Figure~\ref{fig:phi}.

\begin{figure}[h]
    \centering
    \includegraphics[width=0.5\linewidth]{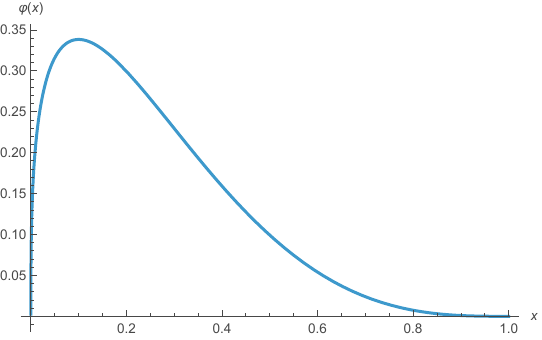}
    \caption{The function $\varphi(x)$}
    \label{fig:phi}
\end{figure}

\begin{lemma} \label{lemma:phi}
The function  $\varphi$ satisfies the following properties:
    \begin{enumerate}
        \item[(i)] $0 \leq \varphi(x) \leq 1$ for every $x \in [0,1]$ and $\varphi(0) = \varphi(1) = 0 $;
        \item[(ii)] $\varphi \in C^2 \big((0,1)\big)$;

        \item[(iii)]
    \[
    \lim_{x\to0^+}\varphi'(x)=+\infty
    \qquad\text{and}\qquad
    \lim_{x\to0^+}
    \frac{\varphi''(x)}{(\varphi'(x))^3}
    =0;
    \]
    
    \item[(iv)]
    \[
    \lim_{x\to1^-}\varphi'(x)=0
    \qquad\text{and}\qquad
    \lim_{x\to1^-}\varphi''(x)=0.
    \]
    \end{enumerate}
\end{lemma}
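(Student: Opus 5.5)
We verify the four properties of $\varphi(x)=x^{1/3}(1-x)^3$ by direct computation, handling $x\to 0^+$ and $x\to 1^-$ through the leading-order behaviour of the derivatives.

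For (i), both factors $x^{1/3}$ and $(1-x)^3$ lie in $[0,1]$ on $[0,1]$, so their product does too. Both factors vanish at one endpoint, giving $\varphi(0)=\varphi(1)=0$.

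For (ii), on $(0,1)$ the function $x^{1/3}$ is $C^\infty$ because $x>0$, and $(1-x)^3$ is a polynomial. Their product is therefore smooth there, in particular $C^2$.

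For (iii) and (iv), I would write down the derivatives explicitly:
\[
\varphi'(x)=\tfrac13 x^{-2/3}(1-x)^3-3x^{1/3}(1-x)^2 = x^{-2/3}(1-x)^2\bigl(\tfrac13(1-x)-3x\bigr),
\]
and
\[
\varphi''(x)=-\tfrac29 x^{-5/3}(1-x)^3-2x^{-2/3}(1-x)^2+6x^{1/3}(1-x).
\]

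As $x\to0^+$, the factor $(1-x)^2\bigl(\tfrac13(1-x)-3x\bigr)$ tends to $\tfrac13$. Hence $\varphi'(x)\sim \tfrac13 x^{-2/3}\to+\infty$. In $\varphi''$ the dominant term is $-\tfrac29 x^{-5/3}$, so $\varphi''(x)=O(x^{-5/3})$. The cube $(\varphi')^3$ behaves like $\tfrac1{27}x^{-2}$. Therefore $\varphi''/(\varphi')^3=O(x^{-5/3}\cdot x^{2})=O(x^{1/3})\to0$.

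As $x\to1^-$, $\varphi'$ carries the factor $(1-x)^2$, so $\varphi'(x)\to0$. Every term of $\varphi''$ carries at least one factor $(1-x)$, while $x^{-5/3}$, $x^{-2/3}$ and $x^{1/3}$ stay bounded near $1$. Hence $\varphi''(x)\to0$.

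The only step needing care is the ratio limit in (iii). The point is that the exponent $1/3$ makes $\varphi'$ blow up fast enough that $(\varphi')^3\sim x^{-2}$ dominates $\varphi''\sim x^{-5/3}$. This is exactly why the cube root was chosen: a general exponent $x^{a}$ gives $\varphi''\sim x^{a-2}$ and $(\varphi')^3\sim x^{3a-3}$, so the ratio behaves like $x^{1-2a}$, which tends to $0$ precisely when $a<1/2$. For $\varphi'\to\infty$ we also need $a<1$, and $a=1/3$ satisfies both conditions. Everything else is routine.
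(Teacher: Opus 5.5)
Your proof is correct and follows essentially the same route as the paper. You differentiate $\varphi$ explicitly and read off the endpoint limits, and your $\varphi'$ and $\varphi''$ agree with the paper's factored forms $\varphi'(x)=(1-x)^2(1-10x)/(3x^{2/3})$ and $\varphi''(x)=-2(1-x)(1+7x-35x^2)/(9x^{5/3})$. The only cosmetic difference is that the paper simplifies $\varphi''/(\varphi')^3$ to the closed form $-6x^{1/3}(1+7x-35x^2)/\bigl((1-x)^5(1-10x)^3\bigr)$, whereas you get the same $O(x^{1/3})$ decay from leading-order asymptotics.
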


\begin{proof} Direct differentiation gives 
\[
\varphi'(x) = \frac{(1-x)^2(1-10x)}{3x^{2/3}} 
\]
and
\[
\varphi''(x) = -\frac{2(1-x)(1+7x-35x^2)}{9x^{5/3}}. 
\]
Properties~(i), (ii), the first part of (iii), and (iv) follow immediately. Moreover, 
\[
\frac{\varphi''(x)}{(\varphi'(x))^3} = -\frac{ 6x^{1/3}(1+7x-35x^2) }{ (1-x)^5(1-10x)^3 }, 
\]
which tends to $0$ as $x\to0^+$. This proves~(iii). \end{proof}

The usefulness of properties (iii) and (iv) is illustrated by the following statement. We extend  $\varphi$ to $[0, \infty)$ by defining $\varphi(x)=0$ for $x>1$. Because of property (iv), the resulting function is smooth (i.e., $C^2$) on $(0, \infty)$.

\begin{lemma} \label{lemma:smoothjoin}
Assume that $f:[-2,0] \to  \R^3$ and  $g: [0,2] \to \R^3$ are parametrizations of smooth curves with a single intersection point $f(0) = g(0)$. Let $\zz \in \R^3$ be a nonzero vector that is not parallel to any of the tangents or half-tangents of $f$ and $g$. Assume, furthermore, that no chord joining two distinct points of $f([-2,0])\cup g([0,2])$ is parallel to $\zz$. 
Define
\[
F(x) = \begin{cases}
 f(x) - \varphi(-x)\cdot \zz &\text{ for } x<0 \\
 g(x) + \varphi(x)\cdot \zz &\text{ for } x \geq 0
 \end{cases}
\]
for $x \in [-2,2]$. Then $F([-2,2])$ is a smooth curve  whose tangent at $F(0)$ is parallel to $\zz$.    
\end{lemma}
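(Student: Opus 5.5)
The plan is to prove three things separately: that $F$ is injective, that the curve is smooth away from the joint $F(0)$, and that it is smooth at $F(0)$. The last is the only real difficulty. Since $\varphi'(x)\to+\infty$ as $x\to 0^+$, the parametrization $F$ is not differentiable at $0$. So near $0$ I will reparametrize by the coordinate along $\zz$ and check the $C^2$ condition there.

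\emph{Injectivity.} Suppose $F(x)=F(y)$ with $x\neq y$. If both are negative, then $f(x)-f(y)=(\varphi(-x)-\varphi(-y))\zz$. This is either a chord parallel to $\zz$, which is excluded, or $f(x)=f(y)$, which contradicts injectivity of $f$. Two nonnegative parameters are handled the same way with $g$. In the mixed case $x<0\le y$ we get $f(x)-g(y)=(\varphi(-x)+\varphi(y))\zz$. The chord hypothesis then forces $f(x)=g(y)$, and since $f(0)=g(0)$ is the only intersection point, $x=y=0$, a contradiction.

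\emph{Away from $0$.} On $(0,2]$ we have $F'(x)=g'(x)+\varphi'(x)\zz$, and on $[-2,0)$ we have $F'(x)=f'(x)+\varphi'(-x)\zz$. Here $\varphi$ extended by $0$ is $C^2$ on $(0,\infty)$ by Lemma~\ref{lemma:phi}(ii),(iv), and it vanishes near $x=2$, so $F$ is $C^2$ up to the endpoints $\pm2$. The derivative never vanishes, because $\zz$ is not parallel to any tangent of $f$ or $g$ and $f',g'\neq 0$.

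\emph{At $0$.} Set $t(x)=\langle F(x)-F(0),\zz\rangle$. Then $t'(x)=\langle g'(x),\zz\rangle+\varphi'(x)|\zz|^2$ for $x>0$, and the analogous formula with $f$ and $\varphi'(-x)$ holds for $x<0$; note that the minus sign in the definition of $F$ makes the $\varphi'$ term positive on both sides. By Lemma~\ref{lemma:phi}(iii), $t'\to+\infty$ as $x\to0$ from either side. Hence $t$ is a strictly increasing homeomorphism near $0$, with a $C^2$ inverse off $0$, and $x(t)$ is $C^1$ with $x'(0)=0$. Put $G(t)=F(x(t))$. Then
\[ G'=\frac{F'}{t'}\longrightarrow \frac{\zz}{|\zz|^2}\quad (t\to 0), \]
so $G$ is $C^1$ with nonzero derivative parallel to $\zz$ at $0$. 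This gives the stated tangent.

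\emph{Main obstacle: $G''$ at $0$.} We have
\[ G''=\frac{F''t'-F't''}{(t')^3}. \]
Taken alone, $F''/(t')^2$ behaves like $\varphi''/(\varphi')^2\sim x^{-1/3}$, which blows up, so a cancellation is needed. Write $a=\langle g',\zz\rangle$ and $b=\langle g'',\zz\rangle$ (similarly with $f$ for $x<0$). Expanding the numerator, the $\varphi'\varphi''|\zz|^2\zz$ terms cancel exactly. What remains is
\[ \varphi''\bigl(a\zz-|\zz|^2g'\bigr)+\varphi'\bigl(|\zz|^2g''-b\zz\bigr)-b\,g'+a\,g''. \]
After dividing by $(t')^3\sim|\zz|^6(\varphi')^3$, the first term tends to $0$ by the second limit in Lemma~\ref{lemma:phi}(iii). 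The other terms are $O\bigl((\varphi')^{-2}\bigr)$ or smaller, so they also tend to $0$. Hence $G''(t)\to0$ as $t\to0^\pm$. Since $G'$ is continuous, the mean value theorem gives that $G''(0)$ exists and equals $0$, so $G\in C^2$ near $0$. Combined with the first two steps, $F([-2,2])$ is a smooth curve with tangent parallel to $\zz$ at $F(0)$.
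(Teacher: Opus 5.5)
Your proof is correct and follows the same plan as the paper's: injectivity from the chord hypothesis, regularity of $F$ away from $0$, and a reparametrization near the joint by a coordinate along $\zz$, with the $C^2$ property coming from $\varphi''/(\varphi')^3\to 0$ together with a one-sided-limit (mean value) argument. The only difference is the choice of parameter. The paper uses the bending amount $t=\pm\varphi(\pm x)$, so that $\Gamma(t)=g(q(t))+t\,\zz$ with $q=(\varphi|_{[0,\delta]})^{-1}$ extending to a $C^2$ function with $q'(0)=q''(0)=0$, and no cancellation is needed. Your projection $t=\langle F(x)-F(0),\zz\rangle$ instead requires the explicit cancellation of the $\varphi'\varphi''$ terms, which you carry out correctly.
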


\begin{proof}
Global injectivity of $F$ follows immediately: if $F(x) = F(y)$ for some $x, y \in [-2,2]$, then the corresponding original points must differ by a scalar multiple of $\zz$, which contradicts the choice of $\zz$ or the global injectivity of $f$ and $g$.

For $x >0$, we have 
\[
F'(x) = g'(x) + \varphi'(x) \zz.
\]
This vector is nonzero, since otherwise $g'(x)$ would be parallel to
$\zz$. An  analogous argument applies to $x <0$, leading to the regularity of the parametrization for $x \neq 0$. The $C^2$ property on $(-2,0)$ and $(0,2)$ follows from the differentiability properties of $f, g$ and $\varphi$.  Moreover, $F$ agrees with $f$ on $[-2,-1]$ and with $g$ on
$[1,2]$, which shows the endpoint regularity. 

It remains to consider the join at $x_0 = 0$. 
After translating, we may assume that $f(0) = g(0) = F(0) = o$. 
Since $\varphi'(x)>0$ for $0<x<1/10$, the restriction of $\varphi$
to a sufficiently small interval $[0,\delta]$ is strictly increasing.
Introduce the signed parameter
\[
t = \begin{cases}
- \varphi(-x) &\text{ for } - \delta< x<0 \\
\varphi(x) &\text{ for } 0 \leq  x \leq \delta.
\end{cases}
\]
Let $I= [-\delta, \delta]$ and $J=[-\varphi(\delta), \varphi(\delta)]$. Define $q$ to be the local inverse of $\varphi$:
\[
q = \bigl(\varphi|_{[0,\delta]} \bigr)^{-1}.
\]
For $t>0$, the inverse-function formulas give
\[
q'(t) = \frac{1}{\varphi'(q(t))}
\]
and
\[
q''(t) = -\frac{\varphi''(q(t))}{\bigl(\varphi'(q(t))\bigr)^3}.
\]
By property~(iii) of Lemma~\ref{lemma:phi},
\[
\lim_{t\to0^+}q'(t)=0
\qquad\text{and}\qquad
\lim_{t\to0^+}q''(t)=0.
\]
Hence, $q$ extends to a $C^2$ function at $0$ by setting
\[
q(0)=q'(0)=q''(0)=0.
\]
We can describe $F(I)$ in terms of $t$ by the mapping $\Gamma: J \to F(I)$ defined as  
\[
\Gamma(t)=
\begin{cases}
f\bigl(-q(-t)\bigr)+t \cdot \zz&\text{ for }t<0,\\
g\bigl(q(t)\bigr)+t \cdot \zz &\text{ for }t\geq0.
\end{cases}
\]
For $t>0$, we have
\[
\Gamma'(t) = g'\bigl(q(t)\bigr) \bigl(q(t)\bigr)'+\zz
\]
and
\[
\Gamma''(t) = g''\bigl(q(t)\bigr) \bigl((q(t))'\bigr)^2+g'\bigl(q(t)\bigr) \bigl(q(t)\bigr)'' .
\]
Consequently, 
\[
\lim_{t\to0^+}\Gamma'(t)=\zz
\qquad\text{and}\qquad
\lim_{t\to0^+}\Gamma''(t)=0.
\]
For $t<0$, analogous calculations establish the limits
\[
\lim_{t\to0^-}\Gamma'(t)=\zz
\qquad\text{and}\qquad
\lim_{t\to0^-}\Gamma''(t)=0.
\]
Thus, $\Gamma$ is a $C^2$ parametrization of $F(I)$ across $t=0$, with
\[
\Gamma'(0)=\zz\neq0
\qquad\text{and}\qquad
\Gamma''(0)=0.
\]
It is therefore regular at the joining point, and its tangent there
is parallel to $\zz$. This shows that $F([-2, 2])$ is indeed locally smooth everywhere. Since these local regular parametrizations are compatible and $F$ is injective, this yields a global $C^2$ parametrization of $F([-2,2])$,  completing the proof.
\end{proof}

After these technical preparations, we describe the edge-bending algorithm that produces  complete softenings of locally polyhedral tilings. We first treat the polyhedral case and then explain the modifications required for locally polyhedral tilings.

\begin{proof}[Proof of Theorem~\ref{thm:fotetel} for polyhedral tilings.]
We show that each node $V$ of a polyhedral tiling  $\M$  can be softened by a local deformation $\Phi_V$ in such a way that the bending neighborhoods associated with distinct nodes are pairwise disjoint. 
This guarantees that the tiling $\M$ can be completely softened, cf. Definition~\ref{def:Mlagyitas}.

We start by choosing an open ball around each node of $\M$ such that the selected balls are pairwise disjoint, and the ball assigned to a node  $V$ does not meet any face or edge of $\M$ not incident to $V$. These balls will contain the bending neighborhood of the relevant nodes, thus ensuring that the local homeomorphisms $\Phi_V$ can be combined to obtain a global homeomorphism $\Phi$.

For a node $V$ of the tiling, let $B^3(V,r)$ denote the ball assigned to $V$, with boundary $S^2(V,r)$. The intersection of the vertex figure $\nu(V)$ with $S^2(V,r)$ may be regarded as a graph $G$ embedded in $S^2(V,r)$, combinatorially equivalent to the edge graph of a vertex polyhedron of $V$. The vertices and edges of $G$ arise from the intersections of $S^2(V,r)$ with the edges and faces, respectively, of $\M$ containing $V$. Since $\M$ is a normal tiling, only finitely many such edges and faces contain $V$, and hence $G$ is a finite graph.

Select a line  $t_V$ through $V$ that is not coplanar with the line connecting two distinct vertices of $G$, unless that line contains $V$. This implies that $t_V$ does not contain any vertex of $G$ and does not intersect any edge of $G$. From now on, we refer to $t_V$ as the {\em axis}.

We parametrize $\R^3$ with respect to the axis $t_V$ using cylindrical coordinates: place the  origin at $V$ and select an orthonormal basis $\mathbf{x}, \mathbf{y},\mathbf{z}$ so that $\mathbf{z}$ is parallel to $t_V$. Then, for $\eps >0$, $\alpha \in \R$, and $h \in \R$, let the triple $(\varepsilon,\alpha,h)$ describe the point
\[
p(\varepsilon,\alpha,h) = \varepsilon \cos \alpha  \cdot \mathbf{x} + \varepsilon \sin \alpha  \cdot \mathbf{y} + 
h \cdot \mathbf{z}.
\]
Thus, $\varepsilon$ denotes the distance of $p$ from the axis $t_V$, $\alpha \in [0,2\pi)$ is the azimuth angle of $p$ (understood modulo $2 \pi$), and $h \in \R$ is the signed distance from $V$ of the orthogonal projection of $p$ onto $t_V$.

In particular, the cylindrical surface consisting of points at a distance $\varepsilon$ from $t_V$ is given by    
\[
    H_\varepsilon=\{(\varepsilon,\alpha,h):\alpha\in[0,2\pi),\, h\in\mathbb{R}\}.
\]

Assume that $G$ has $n$ vertices $w_1, \ldots, w_n$, where $w_i = (\eps_i, \alpha_i, h_i)$ for $i \in [n]:=\{1, \ldots, n\}$. We may also suppose that 
\[
0 = \alpha_1 < \alpha_{2} < \cdots < \alpha_n < 2 \pi
\]
since no two vertices share the same azimuth angle, due to the choice of $t_V$. Also, define $\alpha_{n+1}:= 2 \pi$; equivalently, assume that the values $\alpha_i$ are interpreted cyclically.

Furthermore, choose $\kappa$ with $0<\kappa<r$ sufficiently small that
\begin{equation}\label{eq:kappadef}
\nu(V) \cap H_\kappa \subset B^3\!\left(V,\frac{r}{4}\right)
\end{equation}
and
\begin{equation}\label{eq:kappa_ball}
\Big(\kappa, \alpha, \frac{3 r}{4} \Big) \in B^3 (V,r)
\end{equation}
for every $\alpha \in [0, 2 \pi)$.
The latter condition is equivalent to requiring that 
\[
\kappa^2+\left(\frac{3r}{4}\right)^2<r^2.
\]
Such a choice of $\kappa$ is possible because, by the choice of the axis, $t_V\cap\nu(V)={V}$, whereas the closure of $t_V\cap B^3(V,r)$ is a segment of length $2r$, and the relevant intersections vary continuously with $\kappa$.

The local homeomorphism $\Phi_V$ is constructed from a family of translations parallel to the axis $t_V$, defined separately on the cylindrical surfaces $H_\varepsilon$ for $0<\varepsilon \leq \kappa$. The magnitude of translation, denoted by  $T(\eps, \alpha) \in \R$ where $\eps \geq 0$ and $\alpha \in [0, 2 \pi)$, depends only on the distance $\varepsilon$ from the axis $t_V$ and the azimuth angle $\alpha$. It is obtained as a smooth approximation of a function $\tau(\eps, \alpha)$ that is piecewise linear in $\alpha$ for each fixed~$\eps$. We  define~$\tau$ as follows. 

\medskip

\begin{itemize}

\item
    First, we describe the translation along the directions corresponding to the edges of the vertex figure. Take a coloring of the vertices of $G$ with two colors $\{ +1, -1 \}$ provided  by Lemma~\ref{lemma:policolor}, so that each face contains vertices from both color classes. For  $i \in [n]$, let $\sigma_i$ be the color (i.e., sign) assigned to the vertex $w_i$, and define $\sigma_{n+1}:= \sigma_1$. For $\varepsilon \in [0,\kappa]$ and $i \in [n+1]$, we define
    \begin{equation}\label{eq:tea}
    \tau(\eps, \alpha_i): = \frac r 4 \cdot \sigma_i \cdot \varphi\left(\frac \eps \kappa \right),
    \end{equation}
    where $\varphi$ is given by \eqref{eq:phi}. Note that the multiplicative constant $r/4$ ensures that the translated images remain in $B^3 \! (V,r/2)$, while the condition that both signs are required for each face ensures the existence of a smooth curve through $V$ in the boundary of each of the corresponding cells.

\item
   Next, for all other angular directions we interpolate linearly between consecutive vertices:
    If $\varepsilon \in [0,\kappa]$ and $\alpha \in (\alpha_i,\alpha_{i+1})$ for some $i \in [n]$
    (remember that $\alpha_1=0$ and $\alpha_{n+1}=2\pi$),
    then there exists a unique $\lambda \in (0,1)$ such that
    \[
    \alpha = (1-\lambda)\cdot\alpha_i + \lambda\cdot \alpha_{i+1}.
    \]
    We then set
    \[
    \tau(\varepsilon,\alpha)
    :=
    (1-\lambda)\cdot\tau(\varepsilon,\alpha_i)
    +
    \lambda\cdot \tau(\varepsilon,\alpha_{i+1}).
    \]

\item 
    Finally, for $\eps > \kappa$, let 
    \[
    \tau(\eps, \alpha):= 0.
    \]
This  guarantees that condition (a) is satisfied.

\end{itemize}
\medskip

\noindent 
The function $\tau$ is illustrated in Figure~\ref{abra:tau} for a set of four evenly distributed edge directions with alternating signs. 

Note that the above properties imply that for every  $0 \leq \eps\leq \kappa$ and every $\alpha \in \R$,
\[
    |\tau(\eps, \alpha)| \leq \frac r 4.
\]

\begin{figure}[H] 
    \centering
    \includegraphics[width=0.7\linewidth]{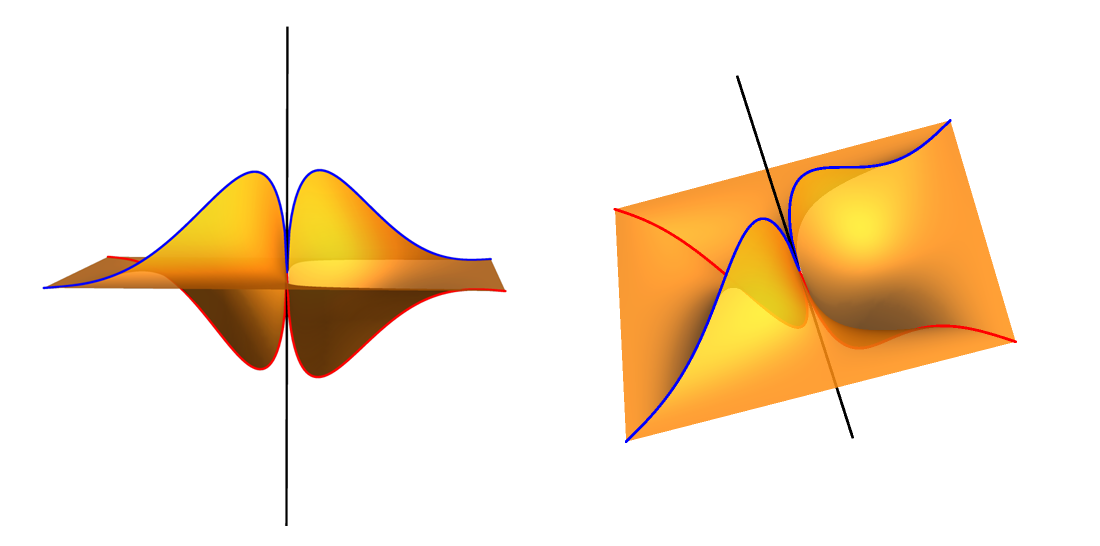}
    \caption{The function $\tau(\varepsilon, \alpha)$ in the case  $n=4$, $\alpha = \{ \frac{\pi}{4}, \frac{3\pi}{4}, \frac{5\pi}{4}, \frac{7\pi}{4} \}$, and $\sigma = \{ +1, -1, +1, -1 \}$}
    \label{abra:tau}
\end{figure}

The map $\tau(\eps, \alpha)$, expressed in the polar coordinates $\eps \geq 0$ and $\alpha \in [0, 2 \pi)$, is continuous everywhere and smooth except when $\eps=0$ or $\alpha=\alpha_i$ for some $i \in [n]$. This follows readily from the properties of the function $\varphi$: in the interior of each region determined by consecutive angles $\alpha_i$ and $\alpha_{i+1}$ this follows from part~(ii) of Lemma~\ref{lemma:phi} and the linear interpolation in the angular variable, while at $\eps = \kappa$, parts (i) and (iv) ensure the smooth join of $\tau$ to the zero function.

We now modify $\tau$ to obtain an interpolating function $T(\eps, \alpha)$  that is smooth everywhere except at the origin, corresponding to $\eps=0$, while preserving its values along the rays $\alpha=\alpha_i$. One explicit construction of $T$, illustrated in Figure~\ref{fig:tauT}, is the following.

Let $\delta >0$ be a small positive number for which the circular arcs  
$[\alpha_i - \delta, \alpha_i +\delta] \mod 2 \pi$  are all disjoint for $ i \in [n]$. Define
\[
    T(\eps, \alpha) = \begin{cases}
\tau(\eps, \alpha) &\text{ if } |\alpha - \alpha_i| > \delta \text{ for every } i \in [n] \\
\tau(\eps, \alpha_i) + \psi\left(\frac{|\alpha - \alpha_i|}{\delta}\right) (\tau(\eps, \alpha) - \tau(\eps, \alpha_i)) &\text{ if } |\alpha - \alpha_i| \leq \delta
    \end{cases}
\]
where $|\cdot|$ denotes the circular distance, and
\[
\psi(x)= x - \frac{\sin(2 \pi x)}{2 \pi}
\]
for $x \in [0,1]$. Note that $\psi(0) = \psi'(0) = \psi''(0) =0$ while $\psi(1)=1$ and  $ \psi'(1) = \psi''(1) =0$, thus $T$ is indeed smooth outside the origin, and it agrees with $\tau$ on the rays corresponding to each $\alpha_i$. Furthermore, it satisfies 
\begin{equation}\label{eq:T}
    |T(\eps, \alpha)| \leq \frac r 4 \cdot \varphi \left( \frac \eps \kappa\right)
\end{equation}
for every $0 \leq \eps\leq \kappa$, $\alpha \in \R$.

\begin{figure} [H]
\centering
\begin{subfigure}{.45\textwidth}
  \centering
  \includegraphics[width=0.7\linewidth]{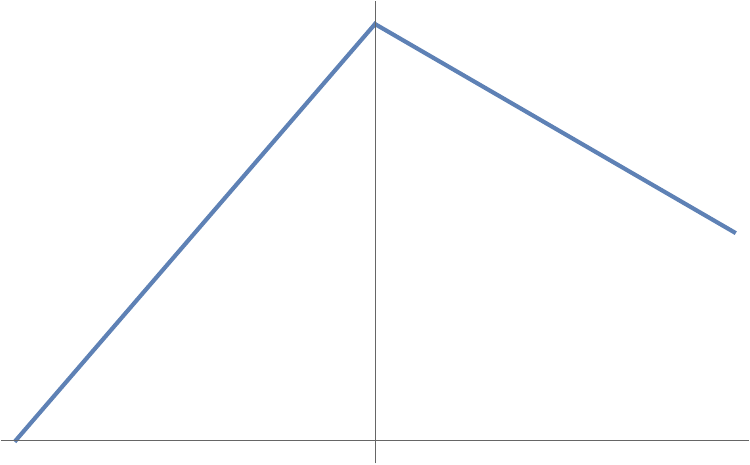} 
  \caption{$\tau(\eps, \alpha)$}
\end{subfigure}
\begin{subfigure}{.45\textwidth}

\centering
  \includegraphics[width=0.7\linewidth]{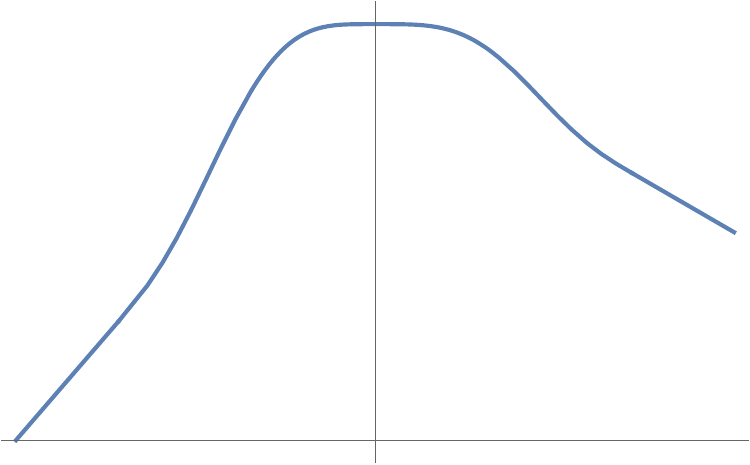}
  \caption{$T(\eps,\alpha)$}
\end{subfigure}
    \caption{The functions $\tau(\eps, \alpha)$ and  $T(\eps, \alpha)$ for fixed $\eps$, around $\alpha = \alpha_i$}    
    \label{fig:tauT}
\end{figure}

To ensure that $\Phi_V$ coincides with the identity outside the ball $B^3(V,r)$, we introduce the cutoff function $\mu: \R \to [0,1]$ defined by
\begin{equation}\label{eq:mudef}
\mu(h) = \begin{cases}
1 &\text{ if } |h| < \dfrac r 4 \\
\dfrac 3 2 - \dfrac 2 r \cdot |h|  &\text{ if }  \dfrac r 4 \leq |h| \leq \dfrac {3r} 4\\
0 &\text{ if } |h| > \dfrac {3r} 4 \\
\end{cases}
\end{equation}
which is a continuous, piecewise linear function on $\R$.

Having established all the necessary ingredients, we define the local homeomorphism  $\Phi_V \colon \R^3 \to \mathbb{R}^3$ in cylindrical coordinates by
\begin{equation} \label{eq:phi0def}
\Phi_V(\eps, \alpha, h)  := \big(\eps, \alpha, h + \mu(h) \cdot T(\eps, \alpha)\big).
\end{equation}
We need to verify that $\Phi_V$ satisfies the conditions of Definition~\ref{def:Vlagyitas}.

First, by the definition of $\mu$ and the bound $|T(\eps,\alpha)|\leq r/4$, the restriction of $\Phi_V$ to every line parallel to $t_V$, corresponding to fixed values of $\eps$ and $\alpha$, is a continuous bijection. Its action on such a line is easy to visualize: it translates the interval $[-r/4,r/4]$ by the amount $T(\eps,\alpha)$, fixes the two half-lines $(-\infty,-3r/4]$ and $[3r/4,\infty)$, and interpolates linearly between the translation and the identity on the two remaining intervals. Consequently, the restriction of  $\Phi_V$ to any line parallel to $t_V$ is a homeomorphism.

Since $\Phi_V$ preserves the coordinates $\eps$ and $\alpha$, these linewise homeomorphisms imply that $\Phi_V$ is bijective. Moreover, the continuity of $T$, the identity $T(0,\alpha)=0$, and the property 
\[
\sup_\alpha |T(\eps, \alpha)| \to 0 \text{ as } \eps \to 0^+
\]
imply that $\Phi_V$ is continuous on $\R^3$, including along the axis $t_V$. To see that its inverse is also continuous, fix $\eps$ and $\alpha$, and write  $c=T(\eps,\alpha)$. The restriction of $\Phi_V$ to the corresponding line parallel to $t_V$ is
\[
f_c(h)=h+\mu(h)c.
\]
This is a piecewise linear function whose slopes by \eqref{eq:mudef} are $1$, $1+\frac{2c}{r}$, or $1-\frac{2c}{r}.$
Since $|c|\leq r/4$, all these slopes are at least $1/2$ and at most $3/2$. Hence, $f_c$ is a strictly increasing homeomorphism of $\R$. The uniform lower bound on the slopes also shows that both $f_c$ and the inverse $f_c^{-1}$ depend continuously on both $c$ and its argument. It follows that $\Phi_V$ is a homeomorphism of $\R^3$. Continuity along the axis follows from
\[
T(\eps,\alpha)\longrightarrow0
\qquad\text{uniformly in $\alpha$ as }\eps\to0.
\]

Next, note that by \eqref{eq:kappadef}, \eqref{eq:kappa_ball}, \eqref{eq:T}, and the properties of $\mu$, every point at which $\Phi_V$ differs from the identity, as well as its image under $\Phi_V$, lies in $B^3(V,r)$. This ensures that by setting
\[
K(V):=\{(\eps,\alpha,h):\eps<\kappa\}\cap \mathrm{int}\, B^3(V,r),
\]
we have $\Phi_V(x)=x$ for every $x\notin K(V)$. Moreover, $T(0,\alpha)=0$ implies that $\Phi_V(V)=V$. Thus, condition~(a) is satisfied, and $K(V)$ serves as the bending neighborhood of the softening transformation. Furthermore, $\Phi_V$ is globally injective.

It remains to examine the effect of $\Phi_V$ on the vertex figure $\nu(V)$. By \eqref{eq:kappadef}, every point $(\eps,\alpha,h)\in\nu(V)$ with $\eps<\kappa$ lies in $B^3(V,r/4)$, and therefore $\mu(h)=1$. Accordingly, on the modified part of $\nu(V)$, the map $\Phi_V$ is given by the axis-parallel displacement
\[
(\eps,\alpha,h)
\longmapsto
\bigl(
\eps,\alpha,h+T(\eps,\alpha)
\bigr).
\]

Since $T$ is smooth away from the origin and the relative interiors of the edges and faces of $\nu(V)$ are smooth curves and surfaces, respectively, the restriction of $\Phi_V$ to each of these relative interiors is smooth. Its inverse on the corresponding image is obtained by subtracting $T(\eps,\alpha)$ and is therefore smooth as well. Hence, these restrictions are smooth diffeomorphisms, and condition~(b) of Definition~\ref{def:Vlagyitas} is satisfied.

The half-tangent criterion of  condition (c) follows from the fact that $\displaystyle \lim_{x \to 0^+} \varphi'(x)=\infty$. Consequently, the half-tangent of the curve $\Phi_V(\overline{Vv})$ at $V$ lies on the axis $t_V$.

Let $C$ be a cell of $\M$ containing $V$. By Lemma~\ref{lemma:policolor}, the face of the spherical vertex graph corresponding to $C$ contains vertices of both signs. Hence, there exist edges $e_i$ and $e_j$ of $C$ incident to $V$ such that $\sigma_i=+1$ and $\sigma_j=-1$. By the verification of condition~(c), the half-tangents of $\Phi_V(e_i)$ and $\Phi_V(e_j)$ at $V$ are the two opposite rays of the axis $t_V$, and hence their union is the complete line $t_V$.

Note that $t_V$ cannot be parallel to any chord connecting distinct points of edges in $\nu(V)$, since otherwise it would be coplanar with the segment connecting the end vertices of the relevant segments or, in case the two points lie on the same edge,  would contain a vertex. Therefore, Lemma~\ref{lemma:smoothjoin} can be applied to a suitable reparametrization of $e_i \cup e_j$, showing that $\Phi_V(e_i) \cup \Phi_V(e_j) $ is indeed a smooth curve, which establishes condition (d). Moreover, by pairing an edge with one of the opposite-colored edges and applying  Lemma~\ref{lemma:smoothjoin} again, we derive the smooth curve part of (c) as well. 
This finishes the proof that $\Phi_V$ is a local softening homeomorphism at $V$.

Finally, since the balls $B^3(V,r)$ associated with distinct nodes are pairwise disjoint, each $\Phi_V$ is the identity outside its bending neighborhood, and the family of bending neighborhoods is locally finite, the local softening transformations combine into a global homeomorphism on $\R^3$. Hence, by Definition~\ref{def:Mlagyitas}, the tiling $\M$ can be completely softened.
\end{proof}

The effect of the softening procedure on the intersection of a face with a cylindrical surface is illustrated in Figure~\ref{abra:simitas}.

\begin{figure}[H]
    \centering
    \includegraphics[width=1\linewidth]{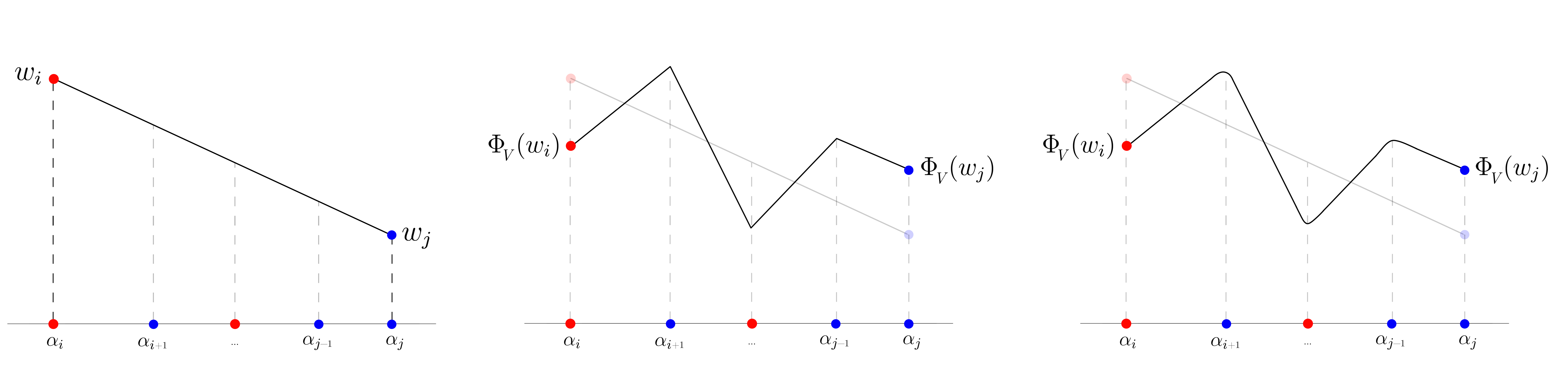}
    \caption{The smoothing procedure on the cylindrical surface $H_\eps$: cross-section of a face and its transformed images using translations by $\tau$ and $T$}
    \label{abra:simitas}
\end{figure}

Next, we extend the algorithm to locally polyhedral tilings, introduced in Definition~\ref{def:lokpol}.

\begin{proof}[Proof of Theorem~\ref{thm:fotetel} for locally polyhedral tilings.]
Let $\M$ be a locally polyhedral tiling. For every node $V$, choose a sufficiently small radius $r_V>0$ such that the balls $B^3(V,r_V)$ are pairwise disjoint, meet no edge or face not incident to the corresponding node, and are contained in neighborhoods witnessing the local polyhedrality of $\M$.

Fix a node $V$ and write $r=r_V$. Let $e_1,\ldots,e_n$ be the edges incident to $V$, and let $u_i$ be the direction of the half-tangent of $e_i$ at $V$. Since there are only finitely many edges incident to $V$, by applying an initial local deformation, we may assume that the $u_i$ are pairwise distinct. 

The spherical vertex graph at $V$ is combinatorially equivalent to the edge graph of a polyhedron. Hence, by Lemma~\ref{lemma:policolor}, we may assign signs $\sigma_i\in\{+1,-1\}$ to the edges in such a way that the boundary of every cell containing $V$ contains edges of both signs.

Choose an axis $t_V$ through $V$ which is not parallel to any $u_i$ and is not contained in any of the planes spanned by two distinct directions $u_i$ and $u_j$. In particular, the azimuth angles of the vectors $u_i$ with respect to $t_V$ are pairwise distinct. By decreasing $r$ if necessary, we may assume that the portions $e_i\cap B^3(V,r)$ lie in pairwise disjoint angular sectors, none of their tangents is parallel to $t_V$, and $t_V$ is not parallel to any chord joining two distinct points of these edge portions.

We use the cylindrical coordinates and notation of the polyhedral case, with axis $t_V$ and radius $r$. Since the half-tangent of $e_i$ is not parallel to $t_V$, the distance from $t_V$ is a regular local parameter on $e_i$ near $V$. Thus, after choosing $\kappa>0$ sufficiently small, every cylindrical surface $H_\eps$, $0<\eps\leq\kappa$, intersects each $e_i$ in exactly one point. Let $\alpha_i(\eps)$ and $h_i(\eps)$ denote its azimuth angle and axial coordinate. These functions are $C^2$ on $(0,\kappa]$, and the functions $\alpha_i$ extend continuously to $\eps=0$. Note that the linear ordering of these angles is independent of $\varepsilon$ for  $\varepsilon \in [0,\kappa]$. By decreasing $\kappa$ if necessary, the angular distances between consecutive functions $\alpha_i(\eps)$ are bounded below by a positive constant. We also require that $\kappa$ satisfies \eqref{eq:kappa_ball} and 
\[
\nu(V) \cap \bigl\{(\eps, \alpha, h): 0 \leq \eps \leq \kappa \bigr\} \subset B^3 \bigl( V, \frac r 4 \bigr).\]

We now repeat the construction from the polyhedral case, replacing the fixed angles $\alpha_i$ by the moving angles $\alpha_i(\eps)$. Instead of formula \eqref{eq:tea} we define 
\[
\tau\bigl(\eps,\alpha_i(\eps)\bigr)
=
\frac r4 \cdot \sigma_i \cdot
\varphi\left(\frac{\eps}{\kappa}\right),
\]
for $0<\eps\leq\kappa$, and extend $\tau(\eps,\alpha)$ linearly in the angular variable between consecutive values. Its smooth approximation $T(\eps,\alpha)$ is constructed as before, using the circular distance from the curves $\alpha = \alpha_i(\eps)$.

Since the functions $\alpha_i(\eps)$ are $C^2$ and their mutual angular distances are bounded below by a positive constant, the smoothing construction from the polyhedral case, carried out using the signed circular angular difference from the curves $\alpha=\alpha_i(\eps)$, allows the corresponding smoothing neighborhoods to be chosen pairwise disjoint. The resulting function $T$ is $C^2$ away from the axis, preserves the prescribed values on the edges, and satisfies
\[
\left|T(\eps,\alpha)\right|
\leq
\frac r4\,
\varphi\left(\frac{\eps}{\kappa}\right).
\]
In particular, $T(\eps,\alpha)\longrightarrow 0$ uniformly in $\alpha$ as $\eps\to0^+$.

Let $\mu$ be the cutoff function used in the polyhedral case and define the local homeomorphism $\Phi_V(\eps,\alpha,h)$ by \eqref{eq:phi0def}.
The proof is completed as in the polyhedral case. Note that after rescaling, Lemma~\ref{lemma:smoothjoin} applies in this more general setting as well, since the choice of $t_V$ guarantees its no-chord assumption. Thus, conditions (c) and (d) are established as before. 

Performing the construction at every node and combining the local maps, whose supports are pairwise disjoint and locally finite, yields a global homeomorphism of $\R^3$ which completely softens $\M$.
\end{proof}

\begin{remark}
As pointed out by one of the referees, the edge-bending algorithm described above is a special, explicitly described  case  of a more general scheme. For simplicity, assume that $V$ is the origin $o$. Given the two-coloring of the vertices of $G$, we consider a smooth family of diffeomorphisms  $f_\eps(u)$, $u\in S^2$ for $\eps \in (0,r]$ that is the identity on $S^2$ for $\eps = r$ and has a singular limiting map at $\eps=0$ that sends the direction vectors of blue vertices to the north pole and the directions of red vertices to the south pole. The local softening homeomorphism $\Phi$ is defined as $\Phi(o)=o$ and 
\[
\Phi(x)= |x | \cdot f_{s(|x|)}\left(\frac {x}{|x|}\right)
\]
for $|x| \in (0, r]$, where  $s \in C^2([0,r])$ is a strictly increasing function that satisfies $s(0)=s'(0)=0$, $s(r)=r$, and $s'(r)= s''(r)=0$. The reparametrization by $s$ ensures that the deformation joins smoothly to the rest of the vertex figure along the boundary of the bending neighborhood. However, to ensure that the union of two bent edges corresponding to vertices of different colors is indeed a smooth curve, the limiting angular map of the family $f$ must satisfy further flatness conditions. Our approach provides a specific, algorithmic realization of this general construction. 
\end{remark}

\section{Examples} 

We conclude the article by illustrating the softening procedure in some special cases.

\subsection{A softened cube tiling}
In the classical cube tiling, the vertex polyhedra are regular octahedra; see Figure~\ref{fig:csucspolieder}. The effect of the softening algorithm on a vertex figure is illustrated in Figure~\ref{fig:oktaederlagyitas}, while the softened cube cells appear in Figure~\ref{fig:lagycella}. In our setting, the softened cube demonstrates that all spikes can be removed while monohedrality is preserved.

\begin{figure}[H]
    \centering
    \includegraphics[width=0.8\linewidth]{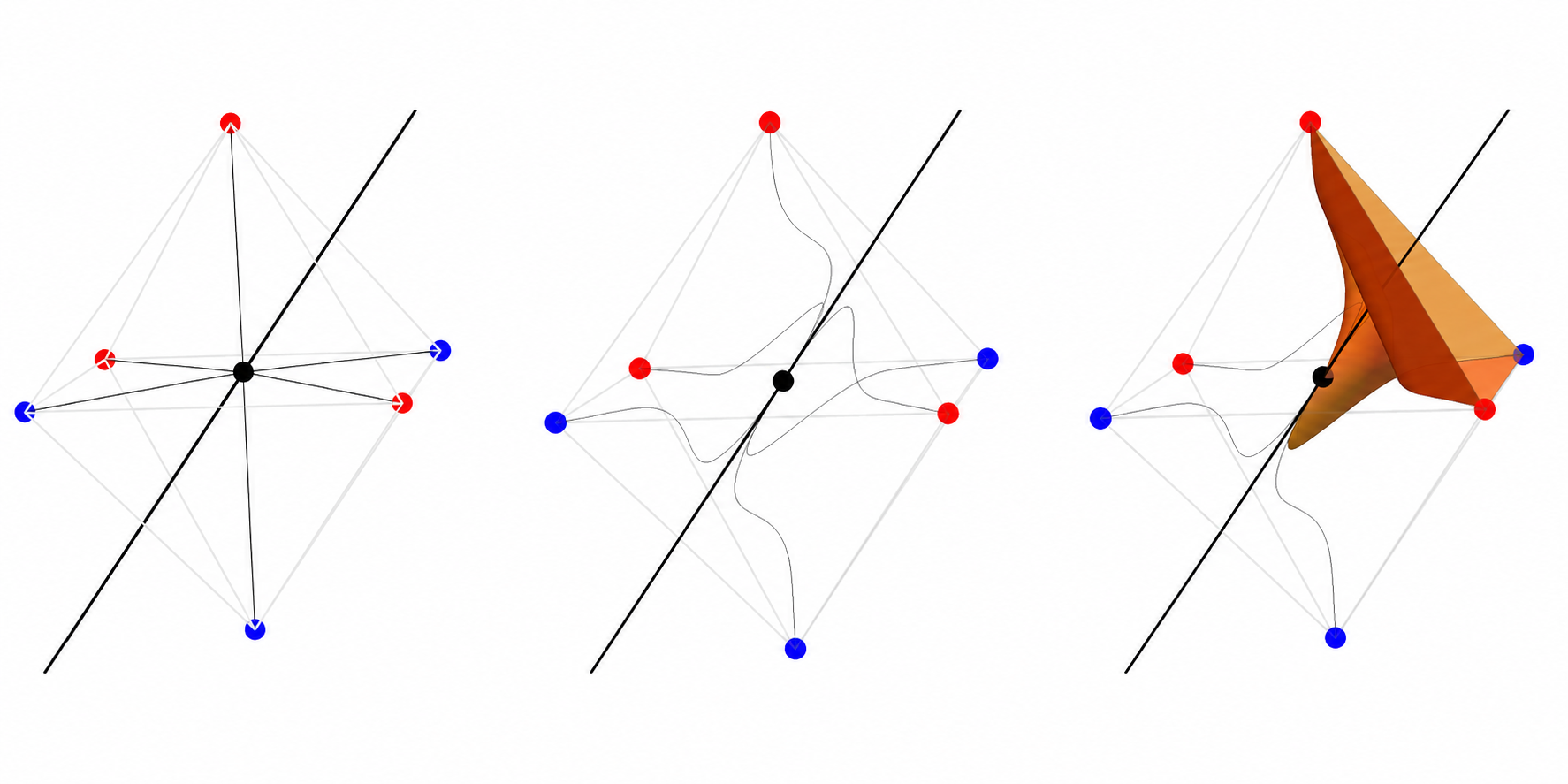}
    \caption{Vertex figure in a softened cube tiling}
    \label{fig:oktaederlagyitas}
\end{figure}

\begin{figure}[h]
    \centering
    \includegraphics[width=1\linewidth]{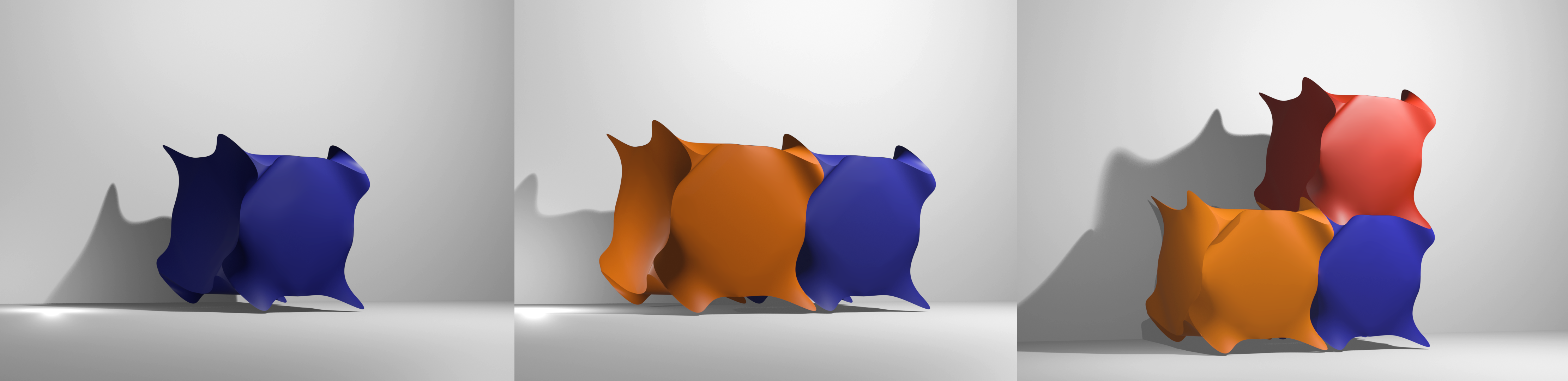}
    \caption{Soft cubes}
    \label{fig:lagycella}
\end{figure}

\subsection{Disconnected color classes}
A crucial requirement of the edge-bending algorithm developed by Domokos, Goriely, G.~Horváth, and Regős~\cite{DOM2024} is that the vertices of each vertex polyhedron can be assigned one of two colors such that no face is monochromatic and at least one color class is edge-connected; see Theorem~\ref{thm:DOM24}. In contrast, our softening method does not require edge-connectedness and therefore gives rise to new types of soft vertices.

We illustrate this phenomenon with a vertex figure that is combinatorially equivalent to a cube. In the 2-coloring in Figure~\ref{fig:kockaszin}, both color classes are disconnected. Theorem~\ref{thm:fotetel} is applicable even in this scenario, resulting in the softened node shown in Figure~\ref{fig:kockacsucsok}.  The meeting of two cells at this node is illustrated in Figure~\ref{fig:kockabelső}.
\begin{figure}[H]
    \centering
    \includegraphics[width=0.8\linewidth]{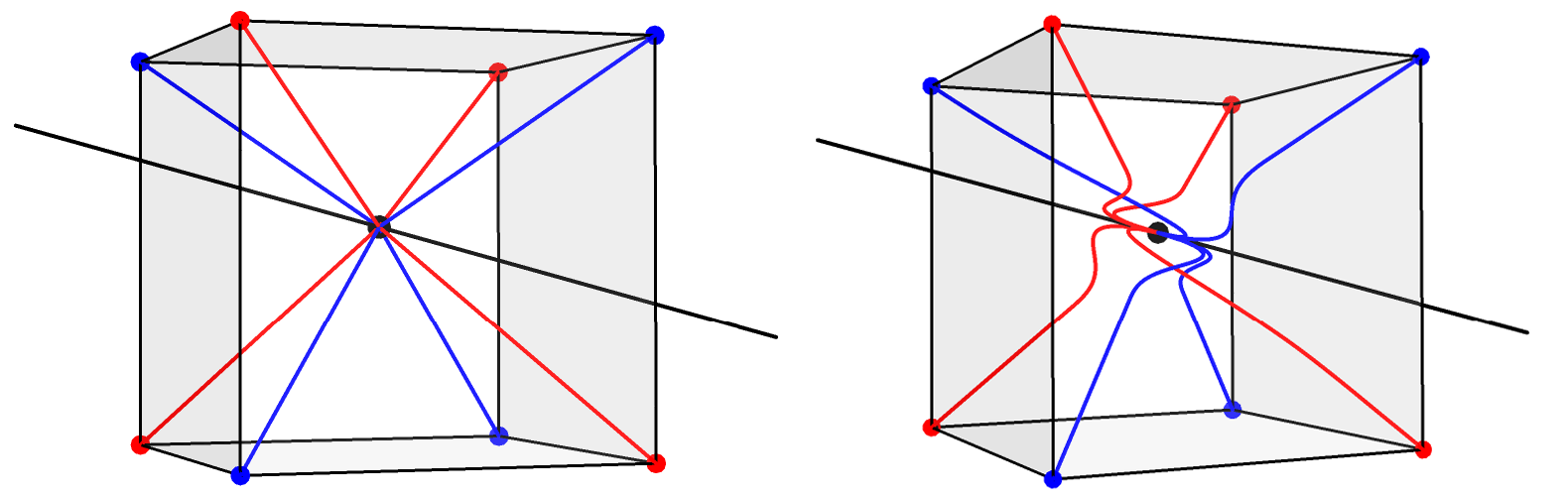}
    \caption{A vertex figure with disconnected color classes}
    \label{fig:kockaszin}
\end{figure}

\begin{figure}[H]
    \centering
    \includegraphics[width=0.8\linewidth]{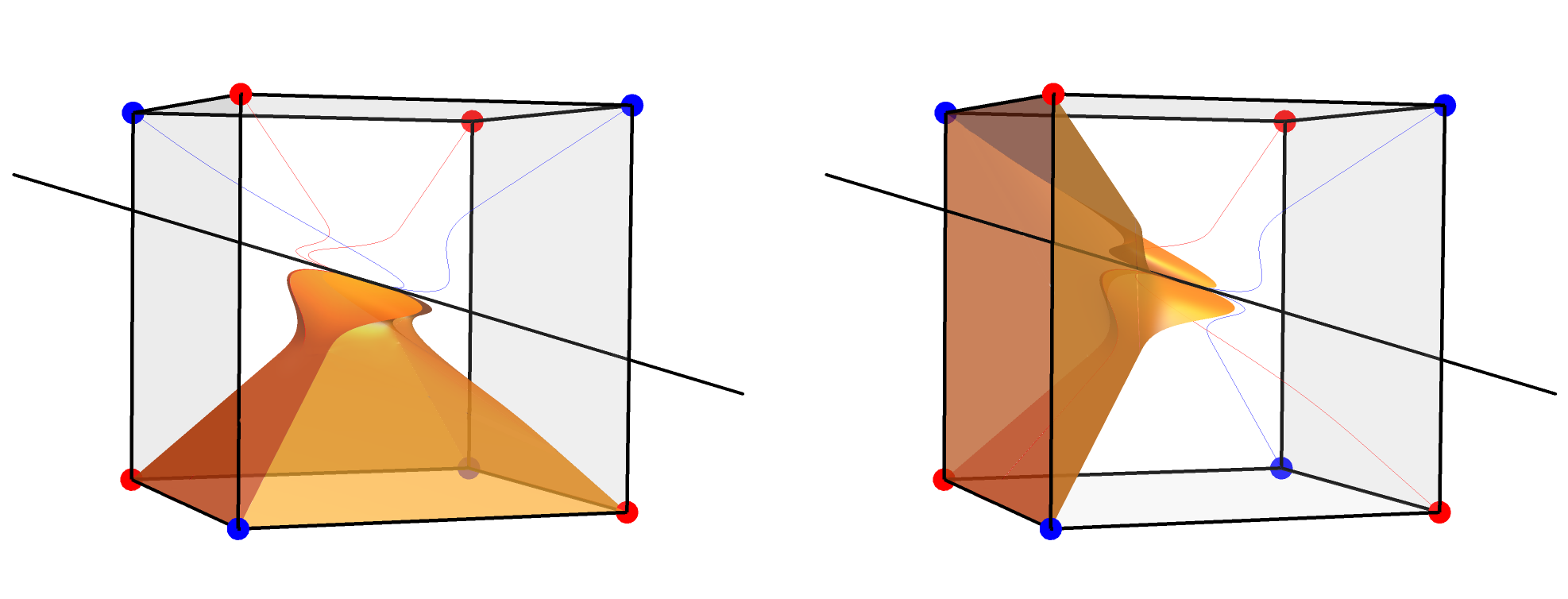}
    \caption{Cells arising from disconnected color classes} 
    \label{fig:kockacsucsok}
\end{figure}

\begin{figure}[H]
    \centering
    \includegraphics[width=0.4\linewidth]{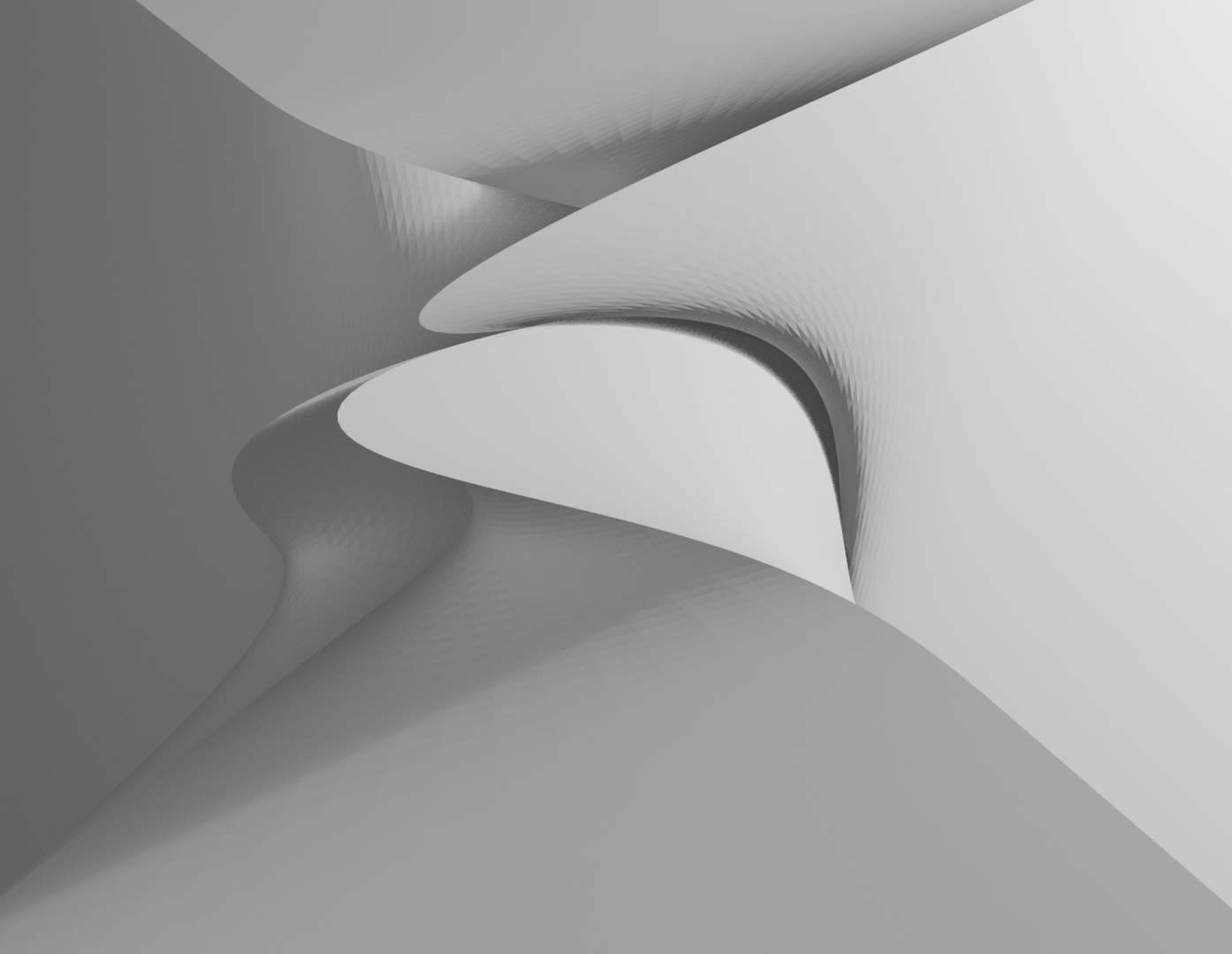}
    \caption{An embracing pair of soft cells}
    \label{fig:kockabelső}
\end{figure}

\subsection{A locally polyhedral tiling}

Figure~\ref{fig:orsolakhely} shows a locally polyhedral tiling obtained by inflating certain edges of a cube tiling into four-faced spindles (which are not polyhedra, but the resulting vertex figures are still locally polyhedral). The softened counterpart of such a spindle is shown in Figure~\ref{fig:ize}.

\begin{figure}[H]
    \centering
    \includegraphics[width=0.75\linewidth]{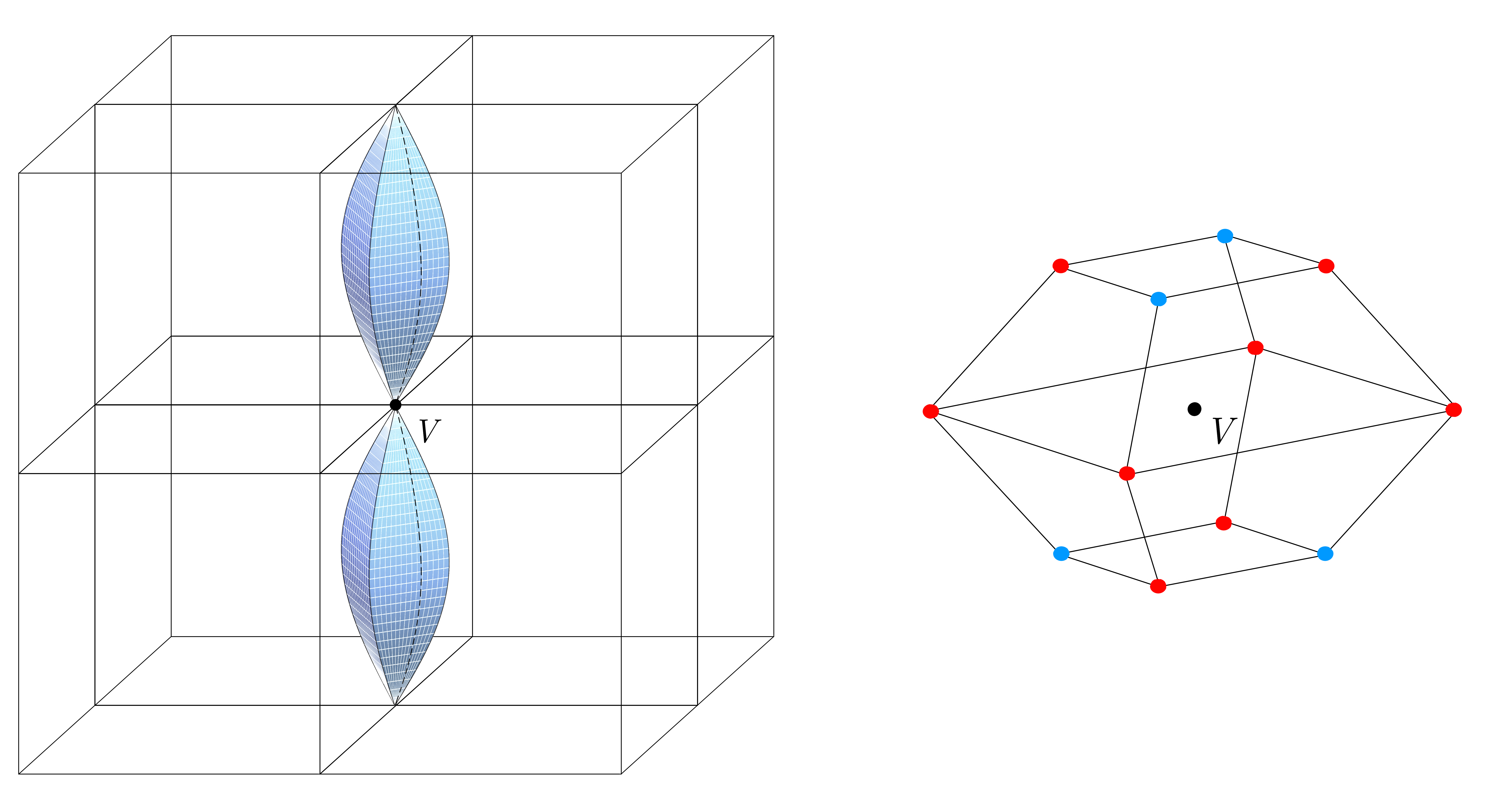}
    \caption{A locally polyhedral tiling and the corresponding well-colored vertex polyhedron}
    \label{fig:orsolakhely}
\end{figure}

\begin{figure}[H]
    \centering
    \includegraphics[width=0.33\linewidth]{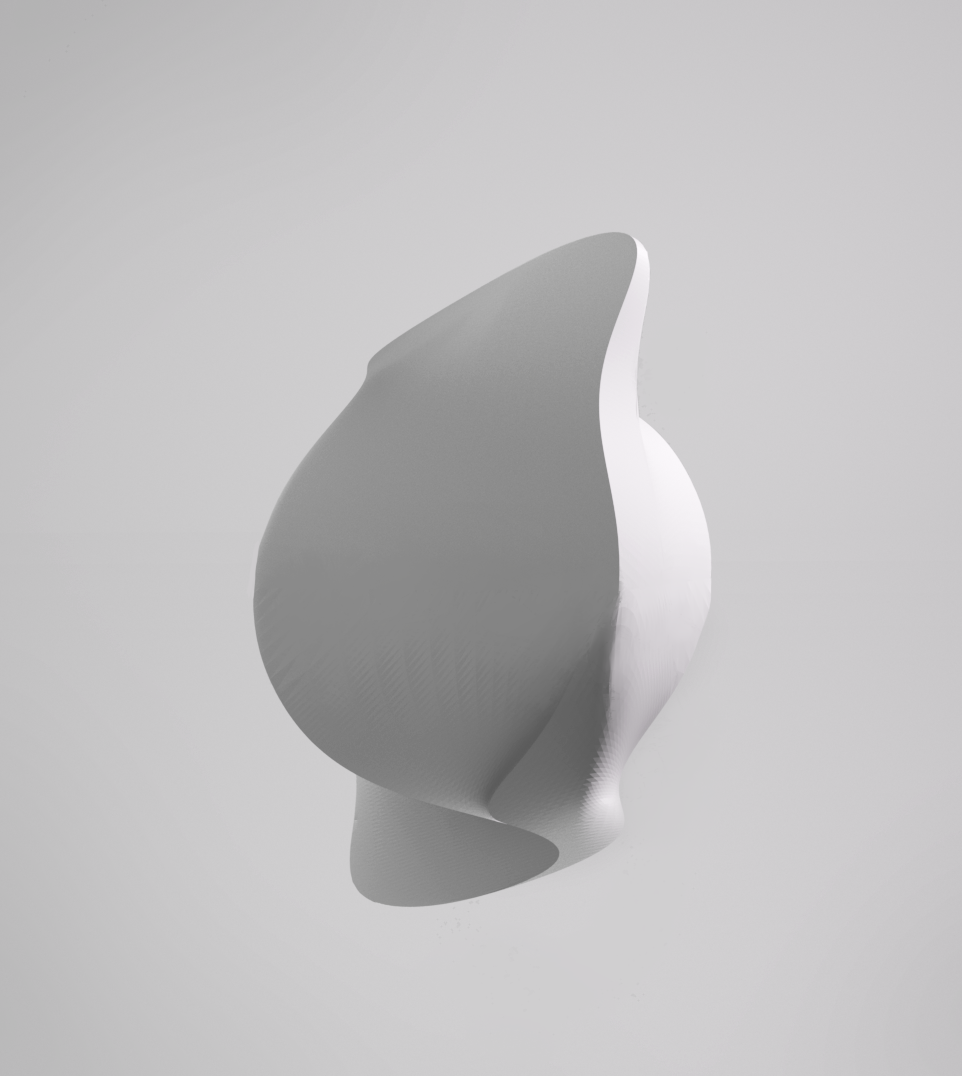}
    \caption{A softened four-faced spindle}
    \label{fig:ize}
\end{figure}


\subsection{Limitations of the edge-bending algorithm for polyhedric tilings}
A simple generalization of the polyhedric tiling in Figure~\ref{fig:paplan} shows the limitation of the softening method. In Figure~\ref{fig:ellenpelda}, we give an example of a vertex figure of the node $V$ in a polyhedric tiling that cannot be softened by the edge-bending algorithm, since in any 2-coloring of its vertices, one of the faces bounded by a pair of parallel edges between two of the vertices $A$, $B$ and $C$ is forced to be monochromatic.

\begin{figure}[H]
    \centering
    \includegraphics[width=0.33\linewidth]{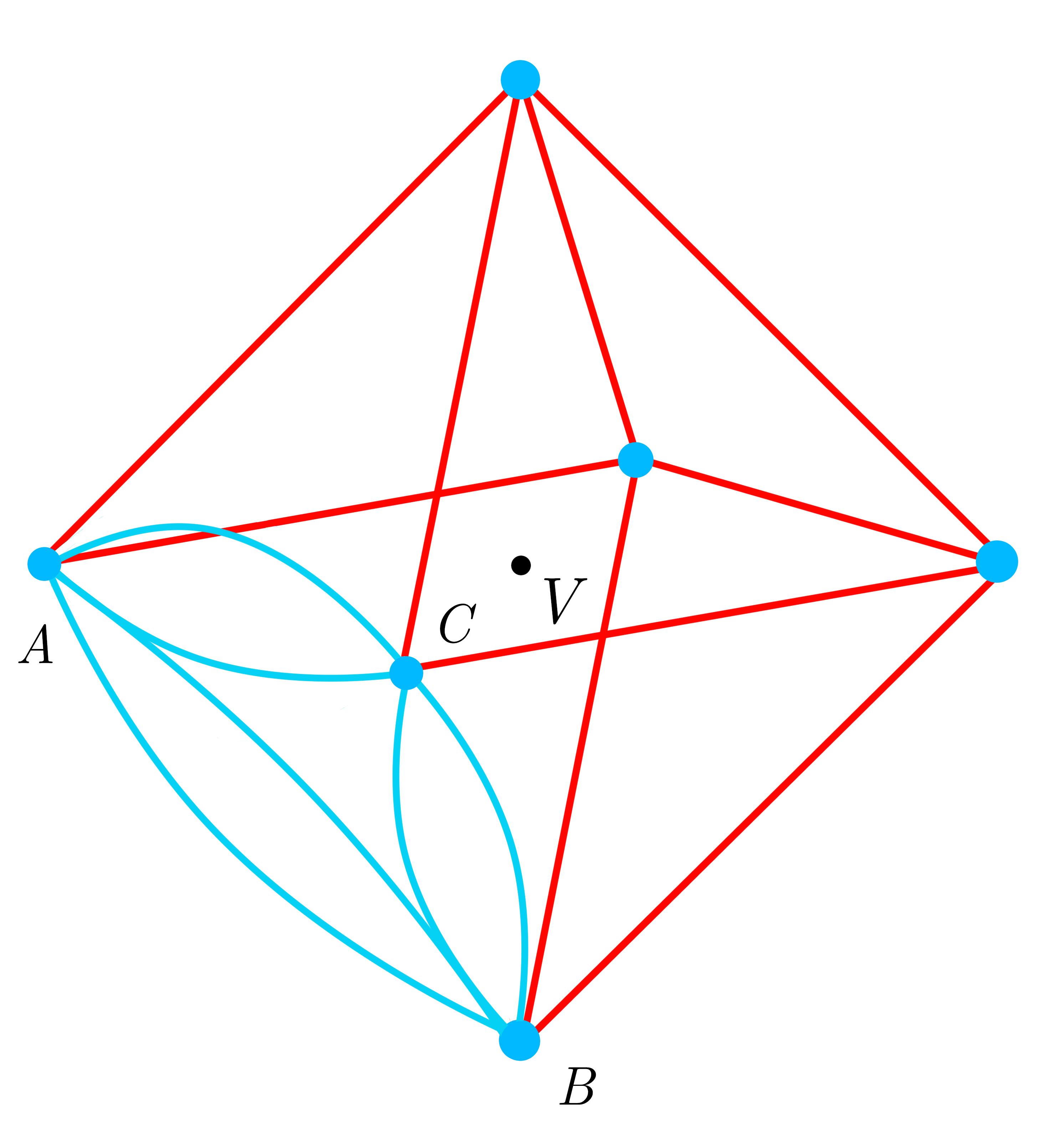}
    \caption{A polyhedric vertex figure for which edge bending fails}
    \label{fig:ellenpelda}
\end{figure}

\section{Acknowledgements}

The authors are grateful to Gábor Fejes Tóth for valuable advice and especially indebted to the anonymous referees for their insightful and constructive comments, which greatly improved both the presentation and clarity of the results.

\medskip

\bibliographystyle{abbrv}
\bibliography{references_softcells}

\bigskip

\noindent
{\sc Gergely Ambrus}
\smallskip

\noindent
{\small 
{\em Bolyai Institute, University of Szeged, Hungary \\ and\\ 
 Alfréd Rényi Institute of Mathematics, Budapest, Hungary
 }}
\smallskip

\noindent
e-mail address: \texttt{ambrus@server.math.u-szeged.hu, ambrus@renyi.hu}

\bigskip

\noindent
{\sc Dorottya Dancsó}
\smallskip

\noindent
{\em Bolyai Institute, University of Szeged, Hungary  
  }
\smallskip

\noindent
e-mail address: \texttt{Dancso.Dorottya.Maria@stud.u-szeged.hu}

\end{document}